    \theoremstyle{nonumberplain}
    \newtheorem{proof}{Proof}
\newtheorem{theorem}{Theorem}[section]
\newtheorem{lemma}{Lemma}[section]
\newtheorem{definition}{Definition}[section]
\newcommand{\myref}[1]{Eq.(\ref{#1}) }
\begin{document}
\newcommand{\topcaption}{%
\setlength{\abovecaptionskip}{0.cm}%
\setlength{\belowcaptionskip}{0.cm}%
\caption}

\title{\bf A multiscale method for inhomogeneous elastic problems with high contrast coefficients}
\date{}
\author{\sffamily Zhongqian Wang$^{1,*}$, Changqing Ye$^{1}$, Eric T. Chung$^1$\\
    {\sffamily\small $^1$ Department of Mathematics, The Chinese University of Hong Kong, Hong Kong Special Administrative Region }\\
}
\renewcommand{\thefootnote}{\fnsymbol{footnote}}
\footnotetext[1]{Corresponding author. }
\maketitle

{\noindent\small{\bf Abstract:}
     In this paper, we develop the constrained energy minimizing generalized multiscale finite element method (CEM-GMsFEM) with mixed boundary conditions (Dirichlet and Neumann) for the elasticity equations in high contrast media.  By a special treatment of mixed boundary conditions separately, and combining the construction of the relaxed and constraint version of the CEM-GMsFEM, we discover that the method offers some advantages such as the independence of the target region's contrast from precision, while the sizes of oversampling domains have a significant impact on numerical accuracy. Moreover, to our best knowledge, this is the first proof of the convergence of the CEM-GMsFEM with mixed boundary conditions for the elasticity equations given. Some numerical experiments are provided to demonstrate the method's performance. }

\vspace{1ex}
{\noindent\small{\bf Keywords:}
    CEM-GMsFEM; mixed boundary conditions; high contrast media}
\section{Introduction}
The study of elastic inhomogeneous mixed boundary conditions is a major area of research in the field of inverse problems, which has wide and practical applications in many fields such as geophysics, oil exploration, remote sensing, ocean exploration, radar and sonar \cite{altawallbeh2018numerical,zhao2021robin,zhao2020staggered,betsch2020generic}. In the past decades, many researchers in different fields have been actively exploring computationally efficient methods for solving mixed boundary PDEs, for example, avoiding internal nodes or domain discretization to evaluate specific solutions \cite{wei2018boundary}, multiscale extensions and averaging techniques to approximate models with asymptotically small chemical patterns \cite{zhang2021effective,altmann2021numerical}, and inhomogeneous boundary treatments for fluid-related models in physics \cite{atlasiuk2019solvability,feireisl2018stationary}. A challenging problem that arises in this domain is the solid rheological properties of materials at the interface, such as the viscoelastic contact interface between transparent plexiglass components and supports used in aerospace and underwater vehicles, the creep problem at the contact interface between different metallic materials in high-temperature environments, and the mechanical behavior of materials with high contrast parameters for interfaces with complete bonding \cite{hauck2022multi,hauck2021super}. These traditional methods are no longer valid.

A different approach to the traditional problem is Multiscale Finite Element Methods (MsFEM) \cite{hou1997multiscale,allaire2005multiscale,chen2003mixed}. MsFEMs have been developed over the past decades by transforming microscale inhomogeneous information into macroscale parameters via homogenization or up-scaling, and then solving at the macroscale level, which mainly includes Representative Volume Element Methods (RVEMs) \cite{sun1996prediction}, Heterogeneous Multiscale Methods (HMMs) \cite{abdulle2012heterogeneous,engquist2007heterogeneous}. However, these methods are difficult to handle the grid-dependent elimination of the localization phenomenon. Generalized multiscale finite element method (GMsFEM) \cite{chung2014adaptive8} was then proposed to obtain an efficient eigenvalue approximation using a local-global model reduction technique, where the constructed global multiscale space can be applied repeatedly to get an efficient multiscale solution with reduced degrees of freedom at the macroscopic scale. Moreover, GMsFEM has been widely adopted in the field of equivalent parameter prediction, elastic wave propagation, acoustic analysis and gradient theory \cite{chung2017goal5}.  Regardless of implementation complexity,  the above methods have a similar issue: the potential high computing cost of handling inhomogeneous issues in intricate large-scale models.

In this paper, we aim to develop the constraint energy minimizing generalized multiscale Galerkin method (CEM-GMsFEM) to solve the complex elastic PDEs with mixed inhomogeneous boundary conditions. As far as we know, no previous research has investigated in this field. Although the bit-structural properties of common concrete \cite{hui2019application}, alloys, and other materials in engineering have obvious effects on macroscopic properties, these materials also have typical inhomogeneous and multiscale characteristics, and there are three main difficulties in using the FEMs to deal with these complex material boundary problems: \begin{itemize}
    \item In the physical sense, the intrinsic structure equations lack internal length parameters characterizing the microstructural features of the material, and heterogeneous theory cannot reasonably explain.
    \item In the numerical calculation, the spatial complexity of the microstructural information of heterogeneous materials can cause expensive computation time and the choice of minimum scale and minimum degrees of freedom is difficult to balance.
    \item In the validation of the computational method, when using oversampling techniques to construct multiscale basis functions, it is difficult to find a suitable model to improve the accuracy of the method.
\end{itemize}
We could use an improved CEM-GMsFEM to address the three limitations presented above \cite{chung2020computational}. Some works have demonstrated that the convergence of this method is independent of contrast, and that it decreases linearly with grid size for an appropriate choice of oversampling size \cite{vasilyeva2019constrained}. At the same time, CEM-GMsFEM is widely used in physical, geographical and environmental engineering \cite{Wang2022LocalMM,Wang2022Adg},  its online methods have been introduced to adaptively construct multiscale basis functions in certain regions to significantly reduce errors  \cite{chung2018fast223}. In the case of mixed boundary conditions, it is difficult to choose an adaptive method to handle multiple boundary cases simultaneously and by choosing a sufficient number of offline basis functions which can result in low errors at different contrast and grid sizes \cite{ye2021asymptotic,ye2020convergence,ye2022homogenization}. Using our recently proposed method and a special online basis construction for the oversampling regions, we show that the errors can be reduced sufficiently by an appropriate choice of the oversampling regions.

The paper is organized as follows. In \autoref{sub:1}, we introduce PDEs for mixed boundary conditions and the notations of grids. In \autoref{sub:2}, the CEM-GMsFEM used in this paper and the procedure for handling the mixed boundary conditions are presented. In \autoref{sub:3}, numerical analysis are given in \autoref{sub:3}. In \autoref{sub:4}, we conduct the conclusions.

\section{Problem formulation and fine grid approximation}\label{sub:1}
In this section, we give mathematical models of anisotropic elastic materials with inhomogeneous Dirichlet and  Neumann boundary conditions. In the following equations, the domain $\Omega \in \mathbb{R}^{d} \left(d =2,3\right)$ denotes an elastic body, and $\partial \Omega$ denotes its boundary with $\partial \Omega=\Gamma_{\rm{a}} \cup \Gamma_{\text{b}} $. The linear elasticity problem consists of finding the displacement $u$, such that:
\begin{equation}
    \left\{\begin{matrix}
 \text{-div}\left ( \sigma \left ( u \right ) \right )=f, & \rm{in}\  \Omega, \\
 u=h, & \rm{on} \ \Gamma_{\rm{a}}, \\
 \sigma \left ( u \right )\cdot n=g, & \rm{on}\ \Gamma_{\text{b}},  \\
 \sigma\left(u\right)=2 \mu \varepsilon\left(u\right)+\lambda \nabla \cdot u \mathcal{I},   &\rm{in}\  \Omega, \\
 \varepsilon \left ( u \right )=\frac{1}{2}\left [ \nabla u+\left ( \nabla u \right )^{T} \right ],      & \rm{in}\  \Omega,
\end{matrix}\right. \label{equinitial}
\end{equation}
where a body force $f$ is considered in the domain $\Omega$,

\qquad the surface force $g$ is on the Neumann boundary part,

\qquad $h$ is on the Dirichlet boundary part,

\qquad $\sigma $ and $\varepsilon$ are stress and strain tensor,

\qquad $n$ is the outward normal vector along $\Gamma_{\text{b}}$,

\qquad $\lambda$ and $\mu$ are the Lam\'{e} coefficients.

Then this inhomogeneous problem with Dirichlet conditions are incorporate through the decomposition $\tilde{u}=u-h$ such that $\tilde{u}=0$ on $\Gamma_{\rm{a}},$ i.e.,
$$\tilde{u} \in H_{\rm{a}}^{1}\left(\Omega\right):=\left\{v \in H^{1}\left(\Omega\right) \mid v=0 \text { on } \Gamma_{\rm{a}}\right\}.$$

We rewrite the problem (\ref{equinitial}) in a variation formulation: find $\tilde{u}\in H_{\rm{a}}^{1}\left(\Omega\right)$, such that
\begin{equation}
\int_{\Omega} \sigma \left ( \tilde{u} \right ) : \text{grad}\left ( v \right ) \,\textnormal{d} x=\int_{\Omega} f \cdot v \,\textnormal{d} x+\int_{\Gamma_{\text{b}}} g\cdot v \,\textnormal{d} s-\int_{\Omega} \sigma \left ( h\right ) : \text{grad}\left ( v \right ) \,\textnormal{d} x,\ \forall v \in H_{\rm{a}}^{1}\left ( \Omega \right ).\label{equ:galerkin}
\end{equation}

Let $\phi_{1},...,\phi_{n}$ be the basis set for $H_{\rm{a}}^{1}\left(\Omega\right),$ then $\tilde{u}_{h}$ satisfies
\begin{equation*}
    A_{h}\tilde{u}_{h}=L_{h},
\end{equation*}
where $A_{h}$ is symmetric, positive definite matrix with
\begin{equation*}
    A_{h,ij}=a\left(\phi_{i},\phi_{j}\right)=\int_{\Omega} \sigma \left ( \phi_{i}\right ) : \epsilon\left ( \phi_{j}\right ) \,\textnormal{d} x,
\end{equation*}
and $L_{h}$ is a vector with $i\text{-th}$ component
\begin{equation*}
    l\left(\phi_{i}\right)=\int_{\Omega} f \cdot \phi_{i} \,\textnormal{d} x+\int_{\Gamma_{\text{b}}} g \cdot \phi_{i} \,\textnormal{d} s-\int_{\Omega} \sigma \left ( h\right ) : \text{grad}\left ( \phi_{i} \right ) \,\textnormal{d} x.
\end{equation*}
Now we present GMsFEM. In this paper, we will develop and analyze the continuous Galerkin(CG) coupling after the construction of local basis functions. In essence, the CG coupling will need vertextrased local basis functions, then we give some mesh notations as follows.
Let $\mathcal{T}^{H}$ be a standard quadrilateralization of the domain $\Omega$, where we call the $\mathcal{T}^{H}$  coarse grid, $H>0$ being the coarse mesh size. Elements of $\mathcal{T}^{H}$ are called coarse lattice blocks. The set of all coarse gird edges is denoted by $\mathcal{E}^{H}$, and the set of all coarse gird nodes is denoted by $\mathcal{S}^{H}.$ We let $\mathcal{T}^{h}$ a fine mesh be the conformal refinement of the quadrilateral and $h>0$ the size of the fine mesh. We use $\mathcal{E}^{h}$ to denote the set of facets in $\mathcal{T}^{h}$ with $\mathcal{E}^{h}=\mathcal{E}_{\rm{a}}^{h} \cup \mathcal{E}_{\rm{b}}^{h}$.  In addition, we denote the number of fine grid nodes by $N_{H}$ and the number of fine grid blocks by $N$. If we consider a two-dimensional region of space $[0,1]\times [0,1]$ with $Nx$ and $Ny$ partitions in the $x$ and $y$ directions respectively, and $nx$ and $ny$ partitions on each fine grid, we have the following equation: $N_{H} = Nx*Ny*nx*ny,$ $ N= \left(Nx*nx+1\right)*\left(Ny*ny+1\right).$
We note that the refinement for the use of conformity is only intended to simplify the discussion of the method and is not a restriction on it.
As is shown in \autoref{picmesh}, we define $K_{i,m}$ as an oversampled domain on each $K_{i}\in \mathcal{T}^{H}:$
\begin{equation*}
    K_{i,m}=\bigcup \left \{ K_{j}\in \mathcal{T}^{H}|\overline{K}_{j}\cap  \overline{K}_{i}\neq 0 \right \}\cup \overline{K}_{i,m-1},
\end{equation*}
where $ \overline{K}_{i}$ is the closure of $K_{i},$ and the initial value $K_{i,0}=K_{i}$ for each element.

\begin{figure}[H]
\centering
\includegraphics[width=0.7\textwidth]{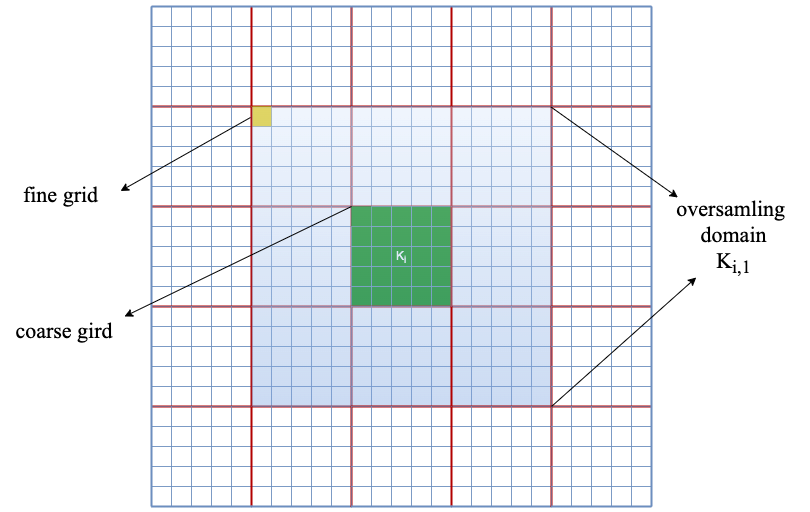}
\caption{Illustration of the oversampling domain}
\label{picmesh}
\end{figure}

\section{The construction of the CEM-GMsFEM basis function}\label{sub:2}
In this section, we describe the construction process of the CEM-GMsFEM, first constructing auxiliary basis functions, and then moving on to multiscale basis functions throughout the oversampling area utilizing constrained energy minimization.  We also provide an innovative approach to inhomogeneous elastic equations with natural and Dirichlet boundary conditions.
\subsection{Auxiliary basis function}
Let $V\left(K_{j}\right)$ be the snapshot space on each coarse grid block $K_{j}$, and we use the method of the spectral problem to solve the basis functions on $K_{j}$: find
$ \left(\theta _{j}^{i}, \phi_{j}^{i}\right) \in \mathbb{R} \times V\left(K_{j}\right)$ such that for all $v\in V\left(K_{j}\right),$
\begin{equation}
    a_{i}\left(\phi_{j}^{i}, v\right)=\theta _{j}^{i} s_{i}\left(\phi_{j}^{i}, v\right), \quad \forall v  \in V\left(K_{j}\right),
\end{equation}
where
\begin{equation}
    a_{i}\left(\phi_{j}^{i}, v\right)=\int_{K_{j}} \sigma \left (\phi_{j}^{i} \right ) : \epsilon\left ( v \right ) \,\textnormal{d} x,
\end{equation}
and
\begin{equation}
    s_{i}\left(\phi_{j}^{i}, v\right)=\int_{K_{j}} \tilde{\kappa} \phi_{j}^{i}\cdot v \,\textnormal{d} x
\end{equation}
with $\tilde{\kappa}=\sum_{i=1}^{N_{1}}\left(\lambda+2\mu\right)\nabla \chi_{i}\cdot \nabla \chi_{i},$ $N_{1}$ is the number of all coarse grids, $\{\nabla \chi_{i}\}_{i=1}^{N_1}$ is a set of partition of unity functions on the coarse grid.


Let the eigenvalues be in the following order:
$$\theta _{j}^{1}\leq \theta _{j}^{2}\leq \theta _{j}^{3}\leq ...\leq \theta _{j}^{i}\leq ...,$$
then the local auxiliary space $V_{\rm{aux}}\left(K_{j}\right)$ is defined by $g_j$ eigenvalue functions as following
\begin{equation}
    V_{\rm{aux}}\left(K_{j}\right)=\text{span}\left \{ \phi_{j}^{i}|1\leq j\leq g_j \right \}.
\end{equation}
We remark that $$V=\bigoplus_{j=1}^{N_{1}} V\left(K_{j}\right),\ V_{\mathrm{aux}}=\bigoplus_{j=1}^{N_{1}} V_{\mathrm{aux}}\left(K_{j}\right).$$
Let $s\left(\phi_{j}^{i}, v\right)=\sum_{j=1}^{N_1}s_{i}\left(\phi_{j}^{i}, v\right),$ we define that $\psi\in V$ is $\phi_{j}^{i}$-orthogonal if
$$s\left(\psi, \phi_{j}^{i}\right)=\left\{\begin{matrix}
1 ,& j^{\prime} \neq j,\\
 0 ,& j^{\prime} = j.
\end{matrix}\right.$$
In addition, we define a projection operator $\pi=\sum_{i=1}^{N_1}\pi_i$ from space $V$ to $V_{\rm{aux}}$ by
$$\pi_i\left(v\right)=\sum_{j=1}^{g_i} \frac{s_{i}\left(v, \phi_{j}^{i}\right)}{s_{i}\left(\phi_{j}^{i}, \phi_{j}^{i}\right)} \phi_{j}^{i}.$$

\subsection{ Offline multiscale basis functions}
In practice, many functions are discontinuous in the domain, so after constructing the auxiliary basis functions, we consider an oversampling method to extend these functions that are discontinuous in the domain onto the oversampling region $K_{j,m}$. For $\phi_{j}^{i} \in H_{\rm{a}}^{1}\left(K_{j,m}\right):=\left\{v \in H^{1}\left(K_{j,m}\right) \mid v=0 \text { on } \Gamma_{\rm{a}}\cap \partial K_{j,m}  \right\},$ we give the relaxed CEM-GMsFEM: find $\xi_{{\rm cem},j,m}^{\mathrm{I},i},$  such that
\begin{equation}
    \xi_{{\rm cem},j,m}^{\mathrm{I},i}=\operatorname{argmin}\left\{a\left(\xi, \xi\right)+s\left(\pi \xi-\phi_{j}^{i}, \pi \xi-\phi_{j}^{i}\right)| \xi \in H_{\rm{a}}^{1}\left(K_{j,m}\right)\right\}.\label{equrelaxed}
\end{equation}
We could also constrain the above problem:
\begin{equation}
    \xi_{{\rm cem},j,m}^{\mathrm{II},i}=\operatorname{argmin}\left\{a(\xi, \xi)| \xi \in H_{\rm{a}}^{1}\left(K_{j,m}\right), \xi \operatorname{is} \phi_{j}^{i} \operatorname{-orthogonal}\right\}.\label{equcons}
\end{equation}
In the following, we describe the constructed form of CEM-GMsFEM in two parts.

\textbf{Part.1. Relaxed CEM-GMsFEM}\\

Note the problem \myref{equrelaxed} is equivalent to the local problem: find $\xi_{{\rm cem},j,m}^{\mathrm{I},i}\in H_{\rm{a}}^{1}\left(K_{j,m}\right),$ such that
\begin{equation}
    a\left(\xi_{{\rm cem},j,m}^{\mathrm{I},i}, z_{1}\right)+s\left(\pi\left(\xi_{{\rm cem},j,m}^{\mathrm{I},i}\right), \pi\left(z_{1}\right)\right)=s\left(\phi_{j}^{i}, \pi\left(z_{1}\right)\right), \quad \forall z_{1} \in H_{\rm{a}}^{1}\left(K_{j,m}\right).
\end{equation}

Then, we obtain the following matrix representation from the above formulation
\begin{equation}
   \left(\mathcal{A}_{z_{1}}+\mathcal{M}_{z_{2}}\mathcal{M}_{z_{2}}^{T}\right) \mathbf{z_{1}}=\overline{\mathcal{M}_{z_{2}}},\label{matrix}
\end{equation}

where
$$\mathcal{A}_{z_{1}}:=\left(a\left(\xi_{i}, \xi_{j}\right)\right), \quad \mathcal{M}_{z_{2}}:=s\left(\left(q_{i},q_{i}\right)\right),$$

$ \mathbf{\theta }, \mathbf{z_{1}},\mathbf{z_{2}}$ are vectors of coefficients for the approximations, and $\overline{\mathcal{M}_{z_{2}}}$ is the internally reordered matrix

of  $\mathcal{M}_{z_{2}}.$

Finally we note that the global multiscale space is defined by
$$V_{{\rm cem}}:=\text{span}\left \{\xi_{{\rm cem},j,m}^{\mathrm{I},i}| 1\leq i \leq g_j, 1\leq j\leq N_1 \right \}.$$

The relaxed CEM-GMsFEM bases are defined by:
\begin{equation}
   \xi_{{\rm cem},j}^{\mathrm{I},i}=\operatorname{argmin}\left\{a\left(\xi, \xi\right)+s\left(\pi \xi-\phi_{{\rm cem},j}^{\mathrm{I},i}, \pi \xi-\phi_{j}^{i}\right)| \xi \in H_{\rm{a}}^{1}\left(\Omega\right) \right\},\label{equrelaxed2}
\end{equation}

which is equivalent to the following problem: find $\xi_{{\rm cem},j}^{\mathrm{I},i}\in V,$   such that
\begin{equation}
  a\left(\xi_{{\rm cem},j}^{\mathrm{I},i}, z_{1}\right)+s\left(\pi\left(\xi_{{\rm cem},j}^{\mathrm{I},i}\right), \pi\left(z_{1}\right)\right)=s\left(\phi_{j}^{i}, \pi\left(z_{1}\right)\right), \quad \forall z_{1} \in H_{\rm{a}}^{1}\left(\Omega\right).\label{equcemrelaxed}
\end{equation}

Furthermore, we define the global space of multiscale basis functions as $$V_{\rm{glo}}=\text{span}\left \{\xi_{{\rm cem},j}^{\mathrm{I},i}| 1\leq i \leq g_j, 1\leq j\leq N_1 \right \}. $$

\textbf{Part.2. Constrained CEM-GMsFEM}\\

Note the problem \myref{equcons} is equivalent to the local problem: find $\xi_{{\rm cem},j,m}^{\mathrm{II},i}\in H_{\rm{a}}^{1}\left(K_{j,m}\right),$

$\theta  \in V_{\rm{aux}}\left(K_{j,m}\right)$  such that
\begin{equation}
    \begin{aligned}
 a\left(\xi_{{\rm cem},j,m}^{\mathrm{II},i}, z_{1}\right)+s\left(z_{1}, \theta \right)&=0,  \quad \forall z_{1} \in H_{\rm{a}}^{1}\left(K_{j,m}\right), \\
s\left(\xi_{{\rm cem},j,m}^{\mathrm{II},i}-\phi_{j}^{i}, z_{2}\right)&=0,  \quad
 \forall z_{2} \in V_{\text {aux }}\left(K_{j,m}\right).
 \end{aligned}
\end{equation}

Then, we obtain the following matrix representation from the above formulation
\begin{equation}
    \begin{array}{r}
\mathcal{A}_{z_{1}}\mathbf{z_{1}}+\mathcal{M}_{z_{2}}\mathbf{\theta }=\mathbf{0}, \\
\mathcal{M}_{z_{2}}^{T} \mathbf{z_{2}}=\mathbf{I},
\end{array}
\end{equation}

where $\mathbf{\theta }, \mathbf{z_{1}},\mathbf{z_{2}}, \mathcal{A}_{z_{1}}, \mathcal{M}_{z_{2}}$ are as in $\myref{matrix}.$

Finally we note that the global multiscale space is defined by
$$V_{{\rm cem}}:=\text{span}\left \{\xi_{{\rm cem},j,m}^{\mathrm{II},i}| 1\leq i \leq g_j, 1\leq j\leq N_1 \right \}.$$

The constrained CEM-GMsFEM is defined by:
\begin{equation}
    \xi_{{\rm cem},j}^{\mathrm{II},i}=\operatorname{argmin}\left\{a(\xi, \xi)| \xi \in H_{\rm{a}}^1 \left(\Omega\right), \xi \operatorname{is} \phi_{j}^{i} \operatorname{-orthogonal}\right\},\label{equcons2}
\end{equation}

which is equivalent to the following problem: find $\xi_{{\rm cem},j}^{\mathrm{II},i}\in V,$ $\theta  \in V_{\rm{aux}}$  such that
\begin{equation}
    \begin{aligned}
 a\left(\xi_{{\rm cem},j}^{\mathrm{II},i}, z_{1}\right)+s\left(z_{1}, \theta\right )&=0,  \quad \forall z_{1} \in H_{\rm{a}}^{1}\left(\Omega \right), \\
s\left(\xi_{{\rm cem},j}^{\mathrm{II},i}-\phi_{j}^{i}, z_{2}\right)&=0,  \quad
 \forall z_{2} \in V_{\text {aux }}.
 \end{aligned}\label{equcemcons}
\end{equation}

Furthermore, we define the global space of multiscale basis functions as $$V_{\rm{glo}}=\text{span}\left \{\xi_{{\rm cem},j}^{\mathrm{II},i}| 1\leq i \leq g_j, 1\leq j\leq N_1 \right \}.$$

\subsection{Inhomogeneous BVPs multiscale method}
Above a thorough review of the CEM-GMsFEM, we provide two operators to cope with the boundary problem. We also divided our approach into two parts to describe. \\

\textbf{Part.1.The Relaxed multiscale method}\\

Firstly, for local auxiliary basis function space, we find $\mathcal{H}_{{\rm cem}}^{j,m} h \in H_{\rm{a}}^{1}\left(K_{j,m}\right),$ such that for all 

$z_{1}\in H_{\rm{a}}^{1}\left(K_{j,m}\right), $
\begin{equation}
a\left(\mathcal{H}_{{\rm cem}}^{j,m} h, z_{1}\right)+s\left(\pi \mathcal{H}_{{\rm cem}}^{j,m} h, \pi z_{1}\right) =\int_{K_{j}}  \sigma \left ( h\right ):\epsilon\left ( z_{1}\right ) \,\textnormal{d} x,
\end{equation}

Also, we find $\mathcal{G}_{{\rm cem}}^{j,m} h \in H_{\rm{a}}^{1}\left(K_{j,m}\right),$ such that for all $z_{1}\in H_{\rm{a}}^{1}\left(K_{j,m}\right), $
\begin{equation}
a\left(\mathcal{G}_{{\rm cem}}^{j,m} g, z_{1}\right)+s\left(\pi \mathcal{G}_{{\rm cem}}^{j,m} g, \pi z_{1}\right) =\int_{\partial K_{j} \cap \Gamma_{\mathrm{b}}} g\cdot z_{1}  \,\textnormal{d} s,
\end{equation}

and then summations as  $\mathcal{H}_{{\rm cem}}^{m} h=\sum_{j=1}^{N_1} \mathcal{H}_{{\rm cem}}^{j,m} h$ and $\mathcal{G}_{{\rm cem}}^{m} g=\sum_{j=1}^{N_1} \mathcal{G}_{{\rm cem}}^{j,m} g.$\\



\textbf{Part.2.The Constrained multiscale method}\\

Firstly, for local auxiliary basis function space, we find $\mathcal{H}_{{\rm cem}}^{j,m} h \in H_{\rm{a}}^{1}\left(K_{j,m}\right),$ such that for all $\theta , z_{1}, z_{2}\in$

$H_{\rm{a}}^{1}\left(K_{j,m}\right), $
\begin{equation}
\begin{aligned}
a\left(\mathcal{H}_{{\rm cem}}^{j,m} h, z_{1}\right)+s\left(\pi \mathcal{H}_{{\rm cem}}^{j,m} h, \theta \right) &=\int_{K_{j}}  \sigma \left ( h\right ):\epsilon\left ( z_{1}\right ) \,\textnormal{d} x, \\
s\left ( \mathcal{H}_{{\rm cem}}^{j,m}h,z_{2} \right )&=\int_{K_{j}}\sigma \left ( h\right ):\epsilon\left( z_{2}\right) \,\textnormal{d} x,
\end{aligned}\label{equHcem}
\end{equation}

and then summations as  $\mathcal{H}_{{\rm cem}}^{m} h=\sum_{j=1}^{N_1} \mathcal{H}_{{\rm cem}}^{j,m} h.$

Also, we find $\mathcal{G}_{{\rm cem}}^{j,m} h \in H_{\rm{a}}^{1}\left(K_{j,m}\right),$ such that for all $\theta , z_{1},z_{2} \in H_{\rm{a}}^{1}\left(K_{j,m}\right), $
\begin{equation}
\begin{aligned}
a\left(\mathcal{G}_{{\rm cem}}^{j,m} g, z_{1}\right)+s\left(\pi \mathcal{G}_{{\rm cem}}^{j,m} g, \theta \right) &=\int_{\partial K_{j} \cap \Gamma_{\mathrm{b}}} g\cdot z_{1}  \,\textnormal{d} s,\\
s\left ( H_{{\rm cem}}^{j}g,z_{2} \right )&=\int_{\partial K_{j} \cap \Gamma_{\mathrm{b}}} g\cdot z_{2}  \,\textnormal{d} s,
\end{aligned}\label{equGcem}
\end{equation}

and then summations as   $\mathcal{G}_{{\rm cem}}^{m} g=\sum_{j=1}^{N_1} \mathcal{G}_{{\rm cem}}^{j,m} g.$\\


After finishing the construction of the above steps, \myref{equinitial} is transformed into the same problem under the constrained multiscale method and the relaxed multiscale method: find $l_{{\rm cem}}^{m} \in V_{{\rm cem}}$ such that for all $v\in V_{\text{cem}},$
\begin{equation}
    a\left(l_{{\rm cem}}^{m}, v\right)=\int_{\Omega} f \cdot v \,\textnormal{d} x+\int_{\Gamma_{\mathrm{b}}} g\cdot v \,\textnormal{d} s-\int_{\Omega} \sigma\left( h\right) : \mathrm{grad}\left ( v \right ) \,\textnormal{d} x+a\left(\mathcal{H}_{{\rm cem}}^{m} h, v\right)-a\left(\mathcal{G}_{{\rm cem}}^{m} g, v\right).
\end{equation}
Finally, we give the approximation solution for \myref{equinitial} by the method above:
\begin{equation}
    u_{{\rm cem}}^{m}=l_{{\rm cem}}^{m}-\mathcal{H}_{{\rm cem}}^{m}h+\mathcal{G}_{{\rm cem}}^{m}g+h.\label{equcemu}
\end{equation}

\section{Analysis}\label{sub:3}
In a previous paper, the error analysis of the relaxed  CEM-GMsFEM was given. We will give the error analysis of the constraint CEM-GMsFEM in this paper. For clarity in the following analytical process, we give the following notation:
    \begin{align*}
        \left \| v \right \|_{a}=\left ( a\left ( v,v \right ) \right )^{\frac{1}{2}}, \quad v \in L^{2}\left(\Omega\right), \quad
        \left \| v \right \|_{a\left ( K \right )}=\left ( \int_{K} \sigma \left ( v \right ) : \varepsilon \left ( v \right )\textnormal{d} x \right )^{\frac{1}{2}}, \quad K\in\Omega ,
    \end{align*}
and
\begin{align*}
        \left \| v \right \|_{s}=\left ( s\left ( v,v \right ) \right )^{\frac{1}{2}}, \quad v \in L^{2}\left(\Omega\right),\quad \left \| v \right \|_{s\left ( K \right )}=\left ( \int_{K}\tilde{\kappa }\left | v \right |^{2}\textnormal{d} x \right )^{\frac{1}{2}}, \quad K\in\Omega.
\end{align*}
We aim to give a global range of error estimates, so the global operator is defined as $\mathcal{H}_{\rm{glo}}=\sum_{j=1}^{N_1}\mathcal{H}_{\rm{glo}}^{j},$ $\mathcal{G}_{\rm{glo}}=\sum_{j=1}^{N_1}\mathcal{G}_{\rm{glo}}^{j}$ and separately satisfy:
\begin{equation}
\begin{aligned}
a\left(\mathcal{H}_{\rm{glo}}^{j} h, z_{1}\right)+s\left(\pi \mathcal{H}_{\rm{glo}}^{j} h, \theta \right) &=\int_{K_{j}}  \sigma \left ( h\right ):\epsilon\left ( z_{1}\right ) \,\textnormal{d} x,\ \forall\ z_1\in H_{\rm{a}}^1\left(K_j\right),\\
s\left ( \mathcal{H}_{\rm{glo}}^{j}h,z_{2} \right )&=\int_{K_{j}}\sigma \left ( h\right ):\epsilon\left ( z_{2}\right ) \,\textnormal{d} x, \ \forall\ z_2\in H_{\rm{a}}^1\left(K_j\right);
\end{aligned}\label{equHwhole}
\end{equation}
\begin{equation}
\begin{aligned}
a\left(\mathcal{G}_{\rm{glo}}^{j} g, z_{1}\right)+s\left(\pi \mathcal{G}_{\rm{glo}}^{j} g, \theta \right)
&=\int_{\partial K_{j} \cap \Gamma_{\mathrm{b}}} g \cdot  z_{1}  \,\textnormal{d} s,\ \forall\ z_1\in H_{\rm{a}}^1\left(K_j\right),\\
s\left ( \mathcal{G}_{\rm{glo}}^{j}g,z_{2} \right )
&=\int_{\partial K_{j} \cap \Gamma_{\mathrm{b}}} g \cdot  z_{2}  \,\textnormal{d} s, \ \forall\ z_2\in H_{\rm{a}}^1\left(K_j\right).
\end{aligned}\label{equGwhole}
\end{equation}
And the global $l_{\rm{glo}}$ satisfies for all  $v\in V_{\rm{glo}},$
\begin{equation}
    a\left(l_{\rm{glo}}, v\right)=\int_{\Omega} f\cdot v \,\textnormal{d} x+\int_{\Gamma_{\mathrm{b}}} g\cdot v \,\textnormal{d} s-\int_{\Omega}\sigma\left( h\right) : \mathrm{grad}\left (v\right ) \,\textnormal{d} x+a\left(\mathcal{H}_{\rm{glo}} h, v\right)-a\left(\mathcal{G}_{\rm{glo}} g, v\right). \label{equ:lwhole}
\end{equation}
Thus, the global solution of \myref{equinitial} is
\begin{equation}
    u_{\rm{glo}}=l_{\rm{glo}}-\mathcal{H}_{\rm{glo}}h+\mathcal{G}_{\rm{glo}}g+h.\label{equglobalu}
\end{equation}

\begin{lemma} \label{lemma1}
By the orthogonal projection $\pi,$ we know that $\pi_{j}$ is each subdomain orthogonal projection, and give the following estimate: for all $ v\in H_{\rm{a}}^1\left(K_j\right),$
\begin{equation*}
     \left\|v-\pi_{j} v\right\|_{s\left(K_{j}\right)}^{2} \leq \frac{ \left\|v\right\|_{a\left(K_{j}\right)}^{2}}{\theta_{j}^{g_{j}+1}},
\end{equation*}
and
\begin{equation*}
    \left\|\pi_{j}v\right\|_{s\left(K_{j}\right)}^{2}=\|v\|_{s\left(K_{j}\right)}^{2}-\left\|v-\pi_{j}v\right\|_{s\left(K_{j}\right)}^{2} \leq\|v\|_{s\left(K_{j}\right)}^{2}.
\end{equation*}
\end{lemma}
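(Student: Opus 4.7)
The plan is to reduce both bounds to a diagonalization in the basis of auxiliary eigenfunctions and then exploit the monotonicity of the eigenvalues $\theta_j^i$.

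First I would normalize the eigenfunctions so that $s_j(\phi_j^i,\phi_j^{i'}) = \delta_{ii'}$ (which is legitimate because the spectral problem $a_j(\phi_j^i,\cdot) = \theta_j^i s_j(\phi_j^i,\cdot)$ is a symmetric generalized eigenproblem with the s.p.d.\ bilinear form $s_j$, so its eigenfunctions form an $s_j$-orthogonal basis of $V(K_j)$ that one may rescale). Under this normalization, the projection simplifies to $\pi_j v = \sum_{i=1}^{g_j} s_j(v,\phi_j^i)\,\phi_j^i$, and the eigenfunctions are also $a_j$-orthogonal with $a_j(\phi_j^i,\phi_j^i) = \theta_j^i$.

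Next, for $v \in H_{\rm a}^1(K_j)$, I expand $v = \sum_{i\ge 1} c_i \phi_j^i$ with $c_i = s_j(v,\phi_j^i)$. Then $v - \pi_j v = \sum_{i>g_j} c_i \phi_j^i$, so by $s_j$-orthonormality
\begin{equation*}
\|v-\pi_j v\|_{s(K_j)}^2 = \sum_{i>g_j} c_i^2, \qquad \|v\|_{s(K_j)}^2 = \sum_{i\ge 1} c_i^2,
\end{equation*}
and by $a_j$-orthogonality
\begin{equation*}
\|v\|_{a(K_j)}^2 = \sum_{i\ge 1} \theta_j^i c_i^2 \;\ge\; \sum_{i>g_j} \theta_j^i c_i^2 \;\ge\; \theta_j^{g_j+1} \sum_{i>g_j} c_i^2,
\end{equation*}
where the last inequality uses the ordering $\theta_j^i \ge \theta_j^{g_j+1}$ for $i > g_j$. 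Dividing by $\theta_j^{g_j+1}$ yields the first claimed estimate.

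For the second estimate, I split $v = \pi_j v + (v - \pi_j v)$ and observe that $s_j(\pi_j v, v - \pi_j v) = 0$ directly from the normalization (each term of $\pi_j v$ is $s_j$-orthogonal to $\phi_j^i$ with $i>g_j$). This gives the Pythagorean identity
\begin{equation*}
\|v\|_{s(K_j)}^2 = \|\pi_j v\|_{s(K_j)}^2 + \|v-\pi_j v\|_{s(K_j)}^2,
\end{equation*}
which rearranges to the stated equality and, since the second term is nonnegative, the inequality. There is no serious obstacle here; the only care needed is in setting up the $s_j$-orthonormal eigenbasis correctly so that $\pi_j$ admits the clean coefficient description used above.
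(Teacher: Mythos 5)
Your proof is correct. Note that the paper itself states this lemma without proof (it is the standard local spectral estimate quoted throughout the CEM-GMsFEM literature), so there is no in-paper argument to compare against; your spectral-expansion proof is exactly the canonical one that fills the gap: rescale to an $s_j$-orthonormal eigenbasis, note the induced $a_j$-orthogonality $a_j(\phi_j^i,\phi_j^{i'})=\theta_j^i\delta_{ii'}$, expand $v$, bound the tail of $\|v\|_{a(K_j)}^2$ below by $\theta_j^{g_j+1}\sum_{i>g_j}c_i^2$, and finish the second claim with the Pythagorean identity. The only points worth flagging are inherited from the paper's loose setup rather than from your argument: the expansion of an arbitrary $v\in H_{\rm a}^1(K_j)$ requires that the eigenfunctions of the local problem be complete in that space (i.e.\ that the ``snapshot space'' $V(K_j)$ really is the full local space, as is standard), the form $s_j$ must be positive definite there so the normalization is legitimate, and the bound is only meaningful when $\theta_j^{g_j+1}>0$, i.e.\ $g_j$ is taken at least past the zero-energy (rigid-body) modes. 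With those standing assumptions, your argument is complete.
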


To better illustrate the strength of our approach, we will introduce the following error estimation metric:
\begin{equation}
    e=l_{\rm{glo}}-\mathcal{H}_{\rm{glo}}h+\mathcal{G}_{\rm{glo}}g+h-u.
\end{equation}
Besides, we define a space
$$W_{h}=\left \{ w\in H_{\rm{a}}^{1}\left(\Omega\right) |\pi \left(w\right)=0 \right \}, $$ and for $z_{1}\in V_{\rm{glo}}$ with $a\left(w, z_{1}\right)=0.$

\begin{theorem}\label{thm:111}
Let $u$ be the real solution of \myref{equinitial}, $u_{\rm{glo}}$ be the numerical solution of \myref{equglobalu}. Then it holds that
\begin{equation}
    \left \| u_{\rm{glo}}-u \right \|_{a}=\left \| e \right \|_{a}\leq\theta ^{-\frac{1}{2}}\left \| f \right \|_{L^{2}\left ( \Omega  \right )},
\end{equation}
where $\Lambda=\underset{j}{\min }\theta_{j}^{g_{j}}.$
\end{theorem}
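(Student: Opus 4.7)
The plan is to combine a Galerkin-style orthogonality with the $a$-orthogonal decomposition $H^1_{\rm{a}}(\Omega) = V_{\rm{glo}} \oplus W_h$ to reduce $a(e,e)$ to a simple pairing with $f$, and then close the estimate using \autoref{lemma1}. As a preliminary step I would verify that $V_{\rm{glo}} \perp_a W_h$: testing the saddle-point system \myref{equcemcons} defining each basis function $\xi^{\mathrm{II},i}_{{\rm cem},j}$ against an arbitrary $w \in W_h$ annihilates the multiplier term $s(w,\theta)$ because $\pi(w)=0$ and $\theta \in V_{\rm{aux}}$, leaving $a(\xi^{\mathrm{II},i}_{{\rm cem},j}, w) = 0$. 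Since $\pi$ restricts to a bijection from $V_{\rm{glo}}$ onto $V_{\rm{aux}}$, this produces a genuine $a$-orthogonal direct sum, and every $v \in H^1_{\rm{a}}(\Omega)$ splits uniquely as $v = v_1 + v_2$ with $v_1 \in V_{\rm{glo}}$ and $v_2 \in W_h$.

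With that splitting available, I would next show $e \in W_h$. Subtracting the primal variational equation for $\tilde u = u - h$ from \myref{equ:lwhole} tested against $v \in V_{\rm{glo}}$ immediately gives the Galerkin orthogonality $a(e,v) = 0$ for all $v \in V_{\rm{glo}}$. Writing $e = e_1 + e_2$ via the decomposition and testing with $v = e_1$ yields $\|e_1\|_a^2 = 0$, hence $e \in W_h$ and $\pi_j(e) = 0$ on each coarse block. The heart of the proof is then the algebraic identity $a(e,e) = -\int_\Omega f \cdot e$, which I would derive as follows. Expanding $e = l_{\rm{glo}} - \mathcal{H}_{\rm{glo}} h + \mathcal{G}_{\rm{glo}} g - \tilde u$, the term $a(l_{\rm{glo}}, e)$ vanishes by $a$-orthogonality; testing \myref{equHwhole} and \myref{equGwhole} with $z_1 = e$ and summing over $j$, the Lagrange contributions vanish because $\pi(e)=0$ and the multipliers lie in $V_{\rm{aux}}$, giving $a(\mathcal{H}_{\rm{glo}} h, e) = \int_\Omega \sigma(h):\mathrm{grad}(e)$ and $a(\mathcal{G}_{\rm{glo}} g, e) = \int_{\Gamma_{\rm{b}}} g \cdot e$. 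Substituting and using the primal equation for $\tilde u$ tested against $e$, the $h$- and $g$-contributions cancel pairwise to leave the desired identity.

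Finally, I would close the estimate by Cauchy-Schwarz, $\|e\|_a^2 \leq \|f\|_{L^2(\Omega)} \|e\|_{L^2(\Omega)}$, combined with \autoref{lemma1}. Since $\pi_j(e) = 0$, the Lemma yields $\|e\|_{s(K_j)}^2 \leq \|e\|_{a(K_j)}^2 / \theta_j^{g_j+1}$; summing over $j$ and using $\theta_j^{g_j+1} \geq \theta_j^{g_j} \geq \Lambda$ gives $\|e\|_s \leq \Lambda^{-1/2} \|e\|_a$. Under the standard CEM-GMsFEM normalization the $s$-norm dominates the $L^2$-norm, and combining with the Cauchy-Schwarz bound produces $\|e\|_a \leq \Lambda^{-1/2} \|f\|_{L^2(\Omega)}$ as claimed. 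The main obstacle I anticipate is making the Lagrange-multiplier terms drop cleanly; this requires reading \myref{equHwhole}-\myref{equGwhole} as a saddle-point system whose multipliers lie in $V_{\rm{aux}}$, so that the $s$-coupling against $e \in W_h$ genuinely vanishes. A secondary subtlety is the comparison between $\|e\|_{L^2}$ and $\|e\|_s$, whose constant is absorbed into the stated bound.
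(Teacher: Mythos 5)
Your proposal is correct and follows essentially the same route as the paper: show $\pi(e)=0$ so that $e\in W_h$, derive the identity $\left\| e \right\|_a^2 = \left| \int_\Omega f\cdot e \,\textnormal{d}x \right|$ by expanding $e=l_{\rm{glo}}-\mathcal{H}_{\rm{glo}}h+\mathcal{G}_{\rm{glo}}g-\tilde{u}$, cancelling the $h$- and $g$-contributions via the corrector definitions \myref{equHwhole}--\myref{equGwhole} and the weak form of $\tilde{u}$, and then close with Cauchy--Schwarz and Lemma \ref{lemma1}. The only differences are cosmetic: you justify $e\in W_h$ through the explicit $a$-orthogonal splitting $H_{\rm{a}}^{1}(\Omega)=V_{\rm{glo}}\oplus W_h$ combined with Galerkin orthogonality (the paper reads it off directly from testing against $z_1\in V_{\rm{glo}}$), and you explicitly flag the $L^2$-versus-$s$-norm comparison that the paper leaves implicit.
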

\begin{proof}
With \myref{equglobalu} and \myref{equcemu}, for all $z_{1}\in V_{\rm{glo}}, $ we have
\begin{equation*}
\begin{aligned}
    \int_{\Omega }&\sigma \left ( l_{\rm{glo}}-\mathcal{H}_{\rm{glo}}h+\mathcal{G}_{\rm{glo}}g+h\right ):\epsilon\left(z_{1}\right) \,\textnormal{d} x\\
    &=\int_{\Omega} f \cdot z_{1} \,\textnormal{d} x+\int_{\Gamma_{\mathrm{b}}} g \cdot z
    _{1} \,\textnormal{d} s-\int_{\Omega}\sigma\left( h\right) : \mathrm{grad}\left( z_{1} \right) \,\textnormal{d} x=a\left(u_{\rm{glo}}, z_{1}\right).
\end{aligned}
\end{equation*}
Together with above definition this implies $l_{\rm{glo}}-\mathcal{H}_{\rm{glo}}h+\mathcal{G}_{\rm{glo}}g+h-u \in W_{h}$ and therefore
\begin{equation}
    \pi \left(l_{\rm{glo}}-\mathcal{H}_{\rm{glo}}h+\mathcal{G}_{\rm{glo}}g+h-u\right)=0.\label{equpi0}
\end{equation}
Now letting $z_{h}\in W_{h}$, i.e., $\pi\left(z_{h}\right)=0$, we have
\begin{equation*}
    \begin{aligned}
        \int_{\Omega }&\sigma \left ( l_{\rm{glo}}-\mathcal{H}_{\rm{glo}}h+\mathcal{G}_{\rm{glo}}g+h-u\right ):\epsilon\left(z_{h}\right) \,\textnormal{d} x\\
        &=\int_{\Omega }\sigma \left ( l_{\rm{glo}}-\mathcal{H}_{\rm{glo}}h+\mathcal{G}_{\rm{glo}}g-\tilde{u}\right ):\epsilon\left(z_{h}\right) \,\textnormal{d} x\\
        &=a\left(l_{\rm{glo}}-\mathcal{H}_{\rm{glo}}h+\mathcal{G}_{\rm{glo}}g-\tilde{u}, z_{h}\right)\\
        &=\underset{=\mathcal{A}}{\underbrace{a\left(l_{\rm{glo}},z_{h}\right)}} -\underset{=\mathcal{B}}{\underbrace{a\left(\mathcal{H}_{\rm{glo}}h, z_{h}\right)}} +\underset{=\mathcal{C}}{\underbrace{a\left(\mathcal{G}_{\rm{glo}}g, z_{h}\right)}}
        -\underset{=\mathcal{D}}{\underbrace{a\left(\tilde{u}, z_{h}\right)}},
    \end{aligned}
\end{equation*}
where $\mathcal{A}$: $a\left(l_{\rm{glo}},z_{h}\right)=0;$

$\mathcal{B}$: $a\left(\mathcal{H}_{\rm{glo}}h, z_{h}\right)=\int_{\Omega}\sigma\left( h\right): \text{grad}\left( z_{h} \right) \,\textnormal{d} x;$

$\mathcal{C}$: $a\left(\mathcal{G}_{\rm{glo}}g, z_{h}\right)=\int_{\partial \Omega \cap \Gamma_{\mathrm{b}}} g \cdot z_{h}  \,\textnormal{d} s;$

$\mathcal{D}$: $a\left(\tilde{u}, z_{h}\right)=\int_{\Omega} f \cdot  z_{h}\textnormal{d} x+\int_{\Gamma_{\mathrm{b}}} g \cdot z_{h}\textnormal{d}\sigma-\int_{\Omega}\sigma\left( h\right) : \mathrm{grad}\left ( z_{h} \right ) \,\textnormal{d} x.$\\
Hence,
\begin{equation*}
    \begin{aligned}
    \int_{\Omega }&\sigma \left ( l_{\rm{glo}}-\mathcal{H}_{\rm{glo}}h+\mathcal{G}_{\rm{glo}}g+h-u\right ):\epsilon\left(z_{h}\right) \,\textnormal{d} x\\
    &=-\int_{\Omega}\sigma\left( h\right): \text{grad}\left ( z_{h} \right ) \,\textnormal{d} x+\int_{\partial \Omega \cap \Gamma_{\mathrm{b}}} g \cdot z_{h}  \,\textnormal{d} s\\
    &\quad -\int_{\Omega} f \cdot z_{h} \,\textnormal{d} x-\int_{\Gamma_{\mathrm{b}}} g \cdot z_{h} \,\textnormal{d} s+\int_{\Omega}\sigma\left( h\right) : \mathrm{grad}\left ( z_{h} \right ) \,\textnormal{d} x\\
    &= -\int_{\Omega} f \cdot z_{h} \,\textnormal{d} x.
\end{aligned}
\end{equation*}

Using \myref{equpi0}, we can choose $z_{h}=e=l_{\rm{glo}}-\mathcal{H}_{\rm{glo}}h+\mathcal{G}_{\rm{glo}}g+h-u$ to obtain
\begin{equation*}
    \begin{aligned}
        \left \|  u_{\rm{glo}}-u \right \|_{a}^{2}
        &=\int_{\Omega} f \cdot e  \\
       &=\int_{\Omega}f \cdot \left ( e-\pi e \right )\\
       &\leq \left \| f \right \|_{L^{2}\left ( \Omega  \right )}\left \|  e -\pi e \right \|_{L^{2}\left ( \Omega  \right )}\\
       &\leq \left ( \Lambda \right )^{-\frac{1}{2}}\left \| f \right \|_{L^{2}\left ( \Omega  \right )}\left \|  e  \right \|_{a}.
    \end{aligned}
\end{equation*}
\end{proof}

{\rm In order to localize the multiscale space, we use the following definition.}

\begin{definition}
 For $K \in \mathcal{T}_{H}$, we write $U\left(K\right)$ as the extension of $K$, if $K \subset U\left(K\right) \subset \Omega$, and $U\left(K\right)$ satisfies
$$U\left(K\right)=\operatorname{int} \bigcup_{\mathcal{T} \in \mathcal{T}_{h}^{*}} \overline{\mathcal{T}}, \quad \text { where } \mathcal{T}_{h}^{*} \subset \mathcal{T}_{h}.$$
And we give the definition of the closure set and open kernel set,
$$W_{h,1}=\left \{  w\in W_{h}: w=0\  {\rm in}\  U\left ( K \right ) \cap \left( \Omega  \backslash K\right)\right  \},$$
$$W_{h,2}=\left \{  w\in W_{h}: w=0\  {\rm in}\   \Omega  \backslash  U\left ( K \right )\right \}.$$
\end{definition}
\begin{definition}
 For $K_i \in \mathcal{T}_{H},$  $m,n\in N ,  n<m, $ a cutoff function $\beta_{i}^{m,n}\in V_{H}$ which is a Lagrange basis function space of $ \mathcal{T}^{H} $  and satisfies
 \begin{equation}
\begin{aligned}
\beta_{i}^{m,n}\left(x\right) & \equiv 0, \quad  {\rm in }\  \Omega \backslash K_{i,m}, & \\
\beta_{i}^{m,n}\left(x\right) & \equiv 1, \quad  {\rm in }\  K_{i,n},& \\
0 \leq \beta_{i}^{m,n}\left(x\right) & \leq 1, \quad  {\rm in}\  K_{i,m} \backslash K_{i,n}.&
\end{aligned}\label{equacutoff}
 \end{equation}
\end{definition}

{\rm We introduce completeness property.}
\begin{lemma}\label{proplemma1}
For $\forall$ $z_{2} \in V_{\rm{aux}},$ there exists a constant $A$ and $z_{1}\in H_{\rm{a}}^{1}\left(\Omega\right) $ such that
\begin{equation*}
    \pi\left(z_{1}\right)=z_{2},\quad \left \| z_{1} \right \|_{a}\leq A \left \|z_{2} \right \|_{s},\quad \text{supp}\left(z_{1}\right)\subset \text{supp}\left(z_{2}\right).
\end{equation*}
\begin{proof}
We could consider the space $V_{\rm{aux}}\left(K_{i}\right)$ and the following problem: find $\xi \in H_{\rm{a}}^{1}\left(K_{i}\right)$ and $z_{1}\in V_{\rm{aux}}\left(K_{i}\right),$ such that
\begin{equation}
    \begin{aligned}
        \int_{K_{i}}\sigma \left ( z_{1}\right ):\varepsilon \left ( v \right ) \,\textnormal{d} x+\int_{K_{i}}\widetilde{k}\xi \cdot v \textnormal{d} x &=0,\quad \quad \quad  \quad \quad  \forall v \in H_{\rm{a}}^{1}\left ( K_{i} \right ),\\
        \int_{K_{i}}\widetilde{k}z_{1}\cdot q \,\textnormal{d} x&=\int_{K_{i,m}}\widetilde{k}z_{2}\cdot q \,\textnormal{d} x,\quad \forall q \in V_{\rm{aux}}\left(K_{i}\right).\label{equlemma1}
    \end{aligned}
\end{equation}
We first proof the completeness of \myref{equlemma1}. For all coarse basis functions $\hat{z_{2}}\in H_{\rm{a}}^{1}\left(\Omega\right),$ we search for $\hat{z_{1}}\in V_{\rm{aux}}$ with
\begin{equation*}
     \pi\left(\hat{z_{1}}\right)=\hat{z_{2}},\quad \left \|  \hat{z_{1}}\left(x\right)  \right \|_{a}\leq A_{3}\left \| \hat{z_{2}}\left(x\right) \right \|_{s},\quad \text{supp}\left(\hat{z_{1}}\right)\subset \text{supp}\left(\hat{z_{2}}\right).
\end{equation*}
It suffices to show that the following function we defined has desired properties,
\begin{equation*}
    z_{1}:=z_{2}+\sum_{x\in K_{i}} \left(z_{2}\left(x\right)-\pi\left(z_{2}\right) \left(x\right)  \right)\hat{z_{1}}.
\end{equation*}
It holds that
\begin{equation*}
    \int_{K_{i}}\widetilde{k}z_{1}\cdot z_{2} \,\textnormal{d} x=\int_{K_{i}}\widetilde{k}\left [ z_{2}+\sum_{x\in K_{i}} \left(z_{2}\left(x\right)-\pi\left(z_{2}\right) \left(x\right)  \right)\hat{z_{1}} \right ]z_{2} \,\textnormal{d} x \geq A_{4}\left \|  z_{2} \right \|_{a\left(K_{i}\right)}^{2},
\end{equation*}
where $$A_{4}=\underset{\xi\neq 0,\xi\in V_{\rm{aux}}}{\underset{K\in\mathcal{T}_{H}}{\text{inf}}}\frac{\int_{T}\tilde{k}z_{1}\cdot \xi \textnormal{d} x}{\left \| \xi \right \|_{s}}.$$
Actually, we find a function $ z_{1}\in H_{\rm{a}}^{1}\left(K_{i} \right ) $ such that
\begin{equation*}
    \left\{\begin{matrix}
\int_{K_{i}}\widetilde{k}z_{1}\cdot z_{2} \,\textnormal{d} x\geq A_{1}\left \| z_{2} \right \|_{s\left ( K_{i}\right )}^{2}\\
\left \| z_{1} \right \|_{a\left ( K_{i}\right )}^{2}\leq A_{2}\left \| z_{2} \right \|_{s\left ( K_{i}\right )}^{2}
\end{matrix}\right.,
\end{equation*}
for constant $A_{1},A_{2}.$
We finish this proof.
\end{proof}
\end{lemma}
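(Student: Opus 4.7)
The plan is to exploit the local direct-sum structure $V_{\rm{aux}} = \bigoplus_{j=1}^{N_1} V_{\rm{aux}}(K_j)$ and reduce the claim to a local construction on each coarse block. First I would decompose $z_2 = \sum_{j=1}^{N_1} z_2^j$ with $z_2^j \in V_{\rm{aux}}(K_j)$, and for each $j$ seek a local pre-image $z_1^j \in H_0^1(K_j)$ satisfying $\pi_j(z_1^j) = z_2^j$. Extending by zero to $\Omega$ puts $z_1^j \in H_{\rm{a}}^{1}(\Omega)$ with support in $\overline{K}_j$, and setting $z_1 := \sum_j z_1^j$ immediately yields $\operatorname{supp}(z_1) \subset \operatorname{supp}(z_2)$, together with $\pi(z_1) = \sum_j \pi_j(z_1^j) = z_2$ because the coarse blocks have pairwise disjoint interiors.

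For the local step I would pose the constrained minimization
\begin{equation*}
    z_1^j = \operatorname{argmin}\bigl\{\,\|v\|_{a(K_j)}^2 \;:\; v \in H_0^1(K_j),\; \pi_j(v) = z_2^j\,\bigr\},
\end{equation*}
which by Lagrange-multiplier theory is equivalent to a finite-dimensional saddle-point system in $H_0^1(K_j) \times V_{\rm{aux}}(K_j)$. Well-posedness of the saddle point, together with the quantitative bound $\|z_1^j\|_{a(K_j)} \leq A\,\|z_2^j\|_{s(K_j)}$, reduces via standard Brezzi theory to a local inf-sup condition of the form
\begin{equation*}
    \inf_{0 \neq q \in V_{\rm{aux}}(K_j)}\;\sup_{0 \neq v \in H_0^1(K_j)}\; \frac{s(v, q)}{\|v\|_{a(K_j)}\,\|q\|_{s(K_j)}} \geq A^{-1},
\end{equation*}
with $A$ independent of the coarse block index and of the contrast of $\lambda,\mu$.

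The main obstacle will be establishing this inf-sup with a contrast-independent constant. My strategy is to use the spectral basis directly: given $q = \sum_{i=1}^{g_j} c_i \phi_j^i \in V_{\rm{aux}}(K_j)$, orthonormalised so that $s(\phi_j^i,\phi_j^{i'}) = \delta_{ii'}$, I would take the test function $v := \chi_j \sum_{i=1}^{g_j} c_i \phi_j^i$, where $\chi_j$ is the partition-of-unity bump associated with $K_j$. This $v$ lies in $H_0^1(K_j)$ because $\chi_j$ vanishes on $\partial K_j$, and a direct computation using the eigenvalue relation $a(\phi_j^i,\cdot) = \theta_j^i s(\phi_j^i,\cdot)$ gives $s(v,q) \geq c_0\,\|q\|_{s(K_j)}^2$ modulo a commutator coming from $\nabla \chi_j$, while $\|v\|_{a(K_j)}^2$ is controlled by $\sum_i |c_i|^2 \theta_j^{g_j}$ plus the same commutator term. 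The crucial cancellation is that the contrast factor $(\lambda+2\mu)|\nabla \chi_j|^2$ appearing in the commutator is, by definition, part of $\tilde{\kappa}$, so it is absorbed into $\|q\|_{s(K_j)}^2$ rather than producing an overall contrast-dependent constant.

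Summing the local stability estimates then yields $\|z_1\|_a^2 = \sum_j \|z_1^j\|_{a(K_j)}^2 \leq A^2 \|z_2\|_s^2$, which closes the global bound. The support and projection conclusions are automatic from the local construction, so the entire technical weight of the proof falls on the contrast-independent inf-sup step.
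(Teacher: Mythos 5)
Your construction is, in substance, the same as the paper's: the paper also localizes to a single coarse block, poses the constrained energy minimization with constraint data $z_{2}$ as a local saddle-point system, and extracts the stability bound $\left\|z_{1}\right\|_{a\left(K_{i}\right)}^{2}\leq A_{2}\left\|z_{2}\right\|_{s\left(K_{i}\right)}^{2}$ from constants ($A_{1}$, $A_{4}$, defined as infima) that are exactly your local inf-sup constant in disguise; your write-up simply organizes this through Brezzi theory and makes the zero-extension and summation over blocks explicit, which the paper leaves implicit.

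The one step that fails as stated is your verification of the inf-sup condition. The functions $\chi_{i}$ whose gradients build $\tilde{\kappa}$ are nodal partition-of-unity functions attached to coarse vertices/neighborhoods, not to elements; such a function equals $1$ at its own node, which lies on $\partial K_{j}$, so $v=\chi_{j}q$ does not vanish on $\partial K_{j}$ and is not an admissible test function in $H_{0}^{1}\left(K_{j}\right)$. The standard repair is the bubble $B_{j}:=\prod_{k}\chi_{k}$, the product running over the vertices $k$ of $K_{j}$: it does vanish on $\partial K_{j}$, and since $0\leq\chi_{k}\leq 1$ one has $\left|\nabla B_{j}\right|^{2}\leq C\sum_{k}\left|\nabla\chi_{k}\right|^{2}$ pointwise, so your key absorption of $\left(\lambda+2\mu\right)\left|\nabla\left(\cdot\right)\right|^{2}$ into $\tilde{\kappa}$ survives and gives $\left\|B_{j}q\right\|_{a\left(K_{j}\right)}^{2}\lesssim\left\|q\right\|_{a\left(K_{j}\right)}^{2}+\left\|q\right\|_{s\left(K_{j}\right)}^{2}\leq\left(1+\theta_{j}^{g_{j}}\right)\left\|q\right\|_{s\left(K_{j}\right)}^{2}$, so the final constant depends on $\max_{j}\theta_{j}^{g_{j}}$, as expected. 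Note also that the numerator bound $s\left(B_{j}q,q\right)=\int_{K_{j}}\tilde{\kappa}B_{j}\left|q\right|^{2}\,\mathrm{d}x\geq c_{0}\left\|q\right\|_{s\left(K_{j}\right)}^{2}$ is not a commutator estimate but a norm equivalence on the finite-dimensional space $V_{\mathrm{aux}}\left(K_{j}\right)$ — this is precisely what the paper hides in its constants $A_{1}$ and $A_{4}$ — and neither your sketch nor the paper's proof actually shows that this constant is contrast-independent; it is simply taken as a positive constant by finite dimensionality. With the bubble substituted for the single $\chi_{j}$, your argument is correct and coincides with the paper's proof in all essentials.
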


\begin{lemma}\label{proplemma2}
For a given $ \hat{w} \in W_{h}$ and a given cutoff function $\beta_{i}^{m,n} $ defined in \myref{equacutoff} and $m\geq n \geq 0$, there exists some ${\hat{w}}^{*} \in{W}_{h,2}\left(\Omega \backslash K_{i,m-n-1}\right) \subset W_{h}$ such that
\begin{equation}
\begin{aligned}
    & \quad \left \| \beta_{i}^{m,n}\hat{w} -\hat{w}^{*} \right \|_{a}+\left \| \pi\left(\beta_{i}^{m,n}\hat{w} -\hat{w}^{*} \right) \right \|_{s}
     \leq \frac{\left \|  \hat{w} \right \|_{a\left ( K_{i,m+2} \backslash K_{i,m-n+2}  \right )}+\left \|  \pi\hat{w} \right \|_{s\left ( K_{i,m+2}\backslash K_{i,m-n+2}  \right )}}{n}.
\end{aligned}
\end{equation}
\begin{proof}
Consider the following problem: find $\xi \in H_{\rm{a}}^{1}\left(K_{i,m}\right)$ and $\beta_{i}^{m,n}\hat{w}\in V_{\rm{aux}}\left(K_i\right),$ such that
\begin{equation}
    \begin{aligned}
        \int_{K_{i,m}}\sigma \left ( \beta_{i}^{m,n}\hat{w}\right ):\varepsilon \left ( v \right ) \,\textnormal{d} x+\int_{K_{i,m}}\widetilde{k}\xi \cdot v \,\textnormal{d} x&=0,\quad \quad \quad  \quad \quad  \forall v \in H_{\rm{a}}^{1}\left ( K_{i,m} \right ),\\
        \int_{K_{i,m}}\widetilde{k}\beta_{i}^{m,n}\hat{w} \cdot q \,\textnormal{d} x&=\int_{K_{i,m}}\widetilde{k}z_{2} \cdot q \,\textnormal{d} x,\quad \forall q \in V_{\rm{aux}}\left ( K_{i,m} \right ).
    \end{aligned}
\end{equation}
Introducing the operator ${\pi}': H_{\rm{a}}^1\left(\Omega\right) \rightarrow L^{2}$  and by Lemma \ref{proplemma1}, there exists $v\in H_{\rm{a}}^{1}\left(\Omega\right) $  such that
\begin{equation}
\begin{aligned}
     &v= {\pi}'\left(\beta_{n}\hat{w}\right),\left \| v \right \|_{a\left ( \Omega \right )}\leq \left \|  {\pi}'\left(\beta_{n}\hat{w}\right)   \right \|_{a\left ( \Omega \right )}, \\
    & {\rm supp}\left ( v \right )\subset {\rm supp}\left ( \beta _{n} \right )\subset \Omega \verb|\| K_{i,m-n-1}.
\end{aligned}
\end{equation}
Then we let
\begin{equation*}
    \begin{aligned}
     \beta_{n} & :=\beta_{i}^{m,n},\\
     \hat{w}^{*}&:={\pi}'\left(\beta_{{m}'}\hat{w}\right)-v\in W_{h,2}\left ( \Omega \verb|\|K_{i,m-{m}'-1} \right ),
    \end{aligned}
\end{equation*}
and $$b_{K}^{n}=\frac{\int_{K_{i,m}}\beta _{n} \,\textnormal{d} x}{\left | K_{i,m} \right |}.$$
Knowing that $\pi{\pi}'\left(\hat{w}\right)=\pi\left(\hat{w}\right)=0,$  for any $T\in\mathcal{T}_{H}, $ we have
\begin{equation}
    \begin{aligned}
      &\quad \left \| {\pi}'\left ( \beta _{n}\hat{w} \right ) \right \|_{a\left ( T \right )}+\left \| {\pi}'\pi\left ( \beta _{n}\hat{w} \right ) \right \|_{s\left ( T\right )}\\
      &=\left \| {\pi}'\left ( \left ( \beta _{n}-b_{K}^{n}\right ) \hat{w} \right ) \right \|_{a\left ( T \right )}+\left \| {\pi}'\pi\left ( \left ( \beta _{n}-b_{K}^{n}\right ) \hat{w} \right ) \right \|_{s\left ( T\right )}\\
      &\leq \left \|  \left ( \beta _{n}-b_{K}^{n}\right ) \hat{w}  \right \|_{a\left ( T \right )}+\left \| \pi \left ( \beta _{n}-b_{K}^{n}\right ) \hat{w}  \right \|_{s\left ( T \right )}.
    \end{aligned}
\end{equation}
Next we estimate the term $\left \| {\pi}'\left ( \beta _{n}\hat{w} \right ) \right \|_{a\left ( \Omega \right )}+\left \| {\pi}'\pi\left ( \beta _{n}\hat{w} \right ) \right \|_{s\left ( \Omega \right )}.$
\begin{equation}
\begin{aligned}
     &\quad \left \| {\pi}'\left ( \beta _{n}\hat{w} \right ) \right \|_{a\left ( \Omega \right )}+\left \| {\pi}'\pi\left ( \beta _{n}\hat{w} \right ) \right \|_{s\left ( \Omega \right )}\\
     & \leq  \underset{T\in K_{i,m+1}\backslash K_{i,m-n+1}}{\sum_{T \in \mathcal{T}_{H}:}}\left\|\left(\beta _{n}-b_{K}^{n}\right ) \hat{w}\right\|_{a\left(T\right)}^{2}+\underset{T\in K_{i,m+1}\backslash K_{i,m-n+1}}{\sum_{T\in\mathcal{T}_{H}:}}\left\|\pi\left(\beta _{n}-b_{K}^{n}\right)\hat{w}\right\|_{s\left(T\right)}^{2}\\
     &\leq  \sum_{T\in\mathcal{T}_{H}}\int_{K_{i,m+1} \backslash K_{i,m-n+1}}\sigma \left ( \left ( \beta _{n}-b_{K}^{n}\right ) \hat{w}  \right ):\varepsilon \left ( \left ( \beta _{n}-b_{K}^{n}\right ) \hat{w} \right ) \,\textnormal{d} x\\
     &\quad + \sum_{T\in\mathcal{T}_{H}}\int_{K_{i,m+1} \backslash K_{i,m-n+1}}\tilde{k}\left ( \left ( \beta _{n}-b_{K}^{n}\right ) \hat{w}  \right )\cdot\left ( \left ( \beta _{n}-b_{K}^{n}\right ) \hat{w}  \right ) \,\textnormal{d} x\\
     &\leq \underset{T\subset K_{i,m}\backslash K_{i,m-n}}{\sum_{T \in \mathcal{T}_{H}:}}\left \| \beta _{n} \left ( \hat{w}-\pi\hat{w} \right ) \right \|_{a\left ( T\right )}^{2} +\underset{T\subset K_{i,m+1}\backslash K_{i,m-n-1}}{\sum_{T \in \mathcal{T}_{H}:}}\left \| \pi\left ( \beta _{n}-b_{K}^{n}\right ) \hat{w} \right \|_{a\left ( T \right )}^{2}\\
     &\quad +\underset{T\subset K_{i,m}\backslash K_{i,m-n}}{\sum_{T \in \mathcal{T}_{H}:}}\left \| \pi \beta _{n} \left ( \hat{w}-\pi\hat{w} \right ) \right \|_{s\left ( T\right )}^{2}+\underset{T\subset K_{i,m+1}\backslash K_{i,m-n-1}}{\sum_{T \in \mathcal{T}_{H}:}}\left \| \pi\left(\pi\left ( \beta _{n}-b_{K}^{n}\right ) \hat{w}\right) \right \|_{s\left ( T \right )}^{2}\\
    &\leq \left ( \frac{1}{N_{y}} \right )^{2}\left \| \beta _{n} \right \|_{a\left ( \Omega  \right )}^{2}\left \| \hat{w} \right \|_{a\left ( K_{i,m+1}\backslash K_{i,m-n-1} \right )}^{2} +\underset{T\subset K_{i,m+1}\backslash K_{i,m-n-1}}{\sum_{T \in \mathcal{T}_{H}:}}\left \| \pi\left ( \beta _{n}-b_{K}^{n}\right ) \hat{w}   \right  \|_{a\left ( T \right )}^{2}\\
     &\quad +\left ( \frac{1}{N_{y}} \right )^{2}\left \| \beta _{n}  \right \|_{a\left ( \Omega  \right )}^{2}\left \|\pi \hat{w} \right \|_{s\left ( K_{i,m+1}\backslash K_{i,m-n-1} \right )}^{2}+\underset{T\subset K_{i,m+1}\backslash K_{i,m-n-1}}{\sum_{T \in \mathcal{T}_{H}:}}\left \| \pi\left(\pi \left ( \beta _{n}-b_{K}^{n}\right )  \hat{w}\right)\right \|_{s\left ( T \right )}^{2}\\
      &\leq \left ( \frac{1}{N_{y}} \right )^{2}\left \| \beta _{n}\right \|_{a\left ( \Omega  \right )}^{2}\left \| \hat{w} \right \|_{a\left ( K_{i,m+2}\backslash K_{i,m-n-2} \right )}^{2}+\left ( \frac{1}{N_{y}} \right )^{2}\left \| \beta _{n} \right \|_{a\left ( \Omega  \right )}^{2}\left \|\pi \hat{w} \right \|_{s\left ( K_{i,m+2}\backslash K_{i,m-n-2} \right )}^{2},
\end{aligned}
\end{equation}
where we use the inequality
\begin{equation*}
\begin{aligned}
     \left \| \beta _{n}-b_{K}^{n} \right \|_{a\left ( T \right )}^{2}&\leq \frac{1}{\left(N_{y}\right)^2}\left \| \beta _{n}  \right \|_{a\left ( \Omega  \right )}^{2},\\
     \left \| \beta _{n}-b_{K}^{n} \right \|_{s\left ( T \right )}^{2}&\leq \frac{1}{\left(N_{y}\right)^2}\left \| \beta _{n} \right \|_{s\left ( \Omega  \right )}^{2},
\end{aligned}
\end{equation*}
and assume $N_{y}\leq N_{x},n_{y}\leq n_{x}.$
Taking ${\pi}'\left(\hat{w}\right)=\hat{w}$ and $\pi\left(\hat{w}\right)=0,$ we estimate $\left\| \beta _{n} \left(\hat{w}-{\pi}'\hat{w}\right)\right\|_{a\left(\Omega \right)}^{2}$ and $\left\| \beta_{n}\hat{w}-{\pi}'\hat{w}\right\|_{a\left(\Omega \right)}^{2}$ respectively.
\begin{equation*}
    \begin{aligned}
         &\quad \left\|\beta_{n}\left(\hat{w}-{\pi}'\hat{w}\right)\right\|_{a\left(\Omega \right)}^{2}\\
         &=\sum_{T\in \mathcal{T}_{H}}\left \|  \left ( \beta _{n}-b_{T}^{n}  \right ) \hat{w}-{\pi}'\left ( \beta _{n}-b_{T}^{n}  \right ) \hat{w} \right \|_{a\left ( T \right )}^{2}\\
         &\leq \frac{1}{\left ( N_{x}\cdot n_{x} \right )^{2}}\sum_{T\in \mathcal{T}_{H}}\left \|  \left ( \beta _{n}-b_{T}^{n}  \right )\hat{w}\right \|_{a\left ( T \right )}^{2}+\left \| \left ( \left ( \beta _{n}-b_{T}^{n}  \right ) \hat{w} \right )\left ( {\pi}'\left ( \beta _{n}-b_{T}^{n}  \right ) \hat{w}  \right ) \right \|_{a\left ( T \right )}\\
         &\quad +\sum_{X\in T}\left \| \left( \left ( \beta _{n}-b_{T}^{n}  \right )\hat{w}\right)\right \|_{a\left ( X \right )}^{2}\\
         & \leq \frac{1}{\left ( N_{x}\cdot n_{x} \right )^{2}}\sum_{T\in \mathcal{T}_{H}}\left\| \beta _{n} \left(\hat{w}-\pi\hat{w}\right)\right\|_{a\left(T\right)}^{2}+\left \|\beta _{n} \varepsilon \left ( \hat{w}  \right )\right \|_{a\left ( T \right )}^{2}+\sum_{X\in T}\left \| \left ( \beta _{n}-b_{T}^{n}  \right )\hat{w}\right \|_{a\left ( X \right )}^{2}\\
         &\leq \frac{1}{\left ( N_{x}\cdot n_{x} \right )^{2}}\underset{T\subset K_{i,m+1} \backslash K_{i,m-n-1}}{\sum_{T\in \mathcal{T}_{H}}}\left\|\beta _{n}\right\|_{a\left(T\right)}^{2}\left\|\hat{w} \right\|_{a\left(T\right)}^{2}+\frac{1}{\left ( N_{y} \right )^{2}}\left \| \beta _{n}  \right \|_{a\left ( X \right )}^{2}\sum_{X\in T}\left ( n_{x} \right )^{2}\left \|\hat{w}\right \|_{a\left ( X \right )}^{2}\\
         &\leq \left ( \frac{1}{N_{y}} +\frac{1}{n_{y}} \right )\left \| \beta _{n}\right \|_{a\left ( \Omega \right )}^{2}\cdot \left \|\hat{w} \right \|_{a\left ( K_{i,m+1} \backslash K_{i,m-n-1} \right )}^{2}.
    \end{aligned}
\end{equation*}
Combining above inequalities, we note that
\begin{equation*}
    \begin{aligned}
     &\quad \left \| \beta _{n}\hat{w} -\hat{w}^{*} \right \|_{a\left ( \Omega \right )}^{2}\\
     &\leq \left \|\beta _{n}\hat{w} -{\pi}'\left (\beta _{n}\hat{w}  \right ) \right \|_{a\left ( \Omega \right )}^{2}+\left \| {\pi}'\left ( \beta _{n}\hat{w} \right ) \right \|_{a\left ( \Omega \right )}^{2}\\
     &\leq \left ( \frac{1}{\left ( N_{y}\cdot n_{y} \right )^{2}}\left \| \beta _{n}  \right \|_{a\left ( \Omega\right )}^{2}+\frac{1}{\left ( N_{y} \right )^{2}}\left \|\beta _{n}  \right \|_{a\left ( \Omega  \right )}^{2} \right )\cdot \left \|  \hat{w} \right \|_{a\left ( K_{i,m+2} \backslash K_{i,m-n+2}  \right )}^{2}\\
     &\leq \frac{1}{n^{2}}\left \| \hat{w} \right \|_{a\left ( K_{i,m+2}\backslash K_{i,m-n+2}  \right )}^{2}.
    \end{aligned}
\end{equation*}
Similarly, we have
\begin{equation*}
     \left\|\beta_{n}\left(\hat{w}-{\pi}'\hat{w}\right)\right\|_{a\left(\Omega \right)}^{2}\leq \frac{1}{n^{2}}\left \|\hat{w}\right \|_{a\left ( K_{i,m+2}\backslash K_{i,m-n+2}  \right )}^{2}.
\end{equation*}
This ends the proof.
\end{proof}
\end{lemma}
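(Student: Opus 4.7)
The plan is to construct $\hat{w}^{*}$ as $\beta_{i}^{m,n}\hat{w}$ minus a small correction that kills the non-vanishing $\pi$-projection of the product. The key observation is that since $\hat{w}\in W_{h}$ satisfies $\pi(\hat{w})=0$ block-wise, on each coarse block $T\in\mathcal{T}^{H}$ we have $\pi_{T}(\beta_{i}^{m,n}\hat{w}) = \pi_{T}\bigl((\beta_{i}^{m,n}-b_{T}^{n})\hat{w}\bigr)$, where $b_{T}^{n}$ is the average of the cutoff over $T$. Because $\beta_{i}^{m,n}$ transitions smoothly across the $n$ coarse layers of the annulus $K_{i,m}\setminus K_{i,n}$, the deviation $\beta_{i}^{m,n}-b_{T}^{n}$ is of size $O(1/n)$ in $L^{\infty}(T)$; this is the structural source of the $1/n$ gain in the target estimate.

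First I would apply Lemma \ref{proplemma1} block-by-block to obtain, for each coarse block $T$ meeting the transition region, a function $v_{T}\in H_{\rm{a}}^{1}(\Omega)$ with $\pi(v_{T})=\pi_{T}(\beta_{i}^{m,n}\hat{w})$, $\mathrm{supp}(v_{T})\subset T$, and $\|v_{T}\|_{a}\leq A\,\|\pi_{T}(\beta_{i}^{m,n}\hat{w})\|_{s}$. Setting $v=\sum_{T}v_{T}$ and defining $\hat{w}^{*}:=\beta_{i}^{m,n}\hat{w}-v$ forces $\pi(\hat{w}^{*})=0$, so $\hat{w}^{*}\in W_{h}$. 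The required localization follows because $v$ is supported only in blocks $T$ where $\beta_{i}^{m,n}$ is non-constant, which together with the support of $\beta_{i}^{m,n}\hat{w}$ places $\hat{w}^{*}$ in the desired set; the enlargement by two layers to $K_{i,m+2}\setminus K_{i,m-n+2}$ in the bound accommodates the neighbouring-block overlap introduced by the block-wise lifting.

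Next I would estimate both left-hand terms. For the energy term, $\|\beta_{i}^{m,n}\hat{w}-\hat{w}^{*}\|_{a}=\|v\|_{a}$ is controlled by $\sum_{T}A^{2}\|\pi_{T}((\beta_{i}^{m,n}-b_{T}^{n})\hat{w})\|_{s(T)}^{2}$, and the $s$-norm of $\pi(\beta_{i}^{m,n}\hat{w}-\hat{w}^{*})$ satisfies a parallel bound. Combining the $L^{\infty}$ estimate $\|\beta_{i}^{m,n}-b_{T}^{n}\|_{\infty}\lesssim 1/n$ with the definition of the weighted $s$-norm bounds each block contribution by $n^{-1}\bigl(\|\hat{w}\|_{a(T^{+})}+\|\pi\hat{w}\|_{s(T^{+})}\bigr)$ on a slightly enlarged block $T^{+}$, and summing over the $O(n)$ layers in the annulus yields the right-hand side claimed.

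The main obstacle will be handling the product rule $\sigma(\beta_{i}^{m,n}\hat{w}) = \beta_{i}^{m,n}\sigma(\hat{w})+\tfrac{1}{2}(\hat{w}\otimes\nabla\beta_{i}^{m,n}+\nabla\beta_{i}^{m,n}\otimes\hat{w})$ in the $a$-norm: the commutator piece $\hat{w}\otimes\nabla\beta_{i}^{m,n}$ requires controlling $\|\hat{w}\|_{L^{2}}$ on each block, which is where Lemma \ref{lemma1} enters, converting $\|\hat{w}-\pi\hat{w}\|_{s}^{2}$ into a multiple of $\|\hat{w}\|_{a}^{2}/\theta_{j}^{g_{j}+1}$ and thereby forcing \emph{both} $\|\hat{w}\|_{a}$ and $\|\pi\hat{w}\|_{s}$ to appear together on the right. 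Tracking this coupling carefully, together with the two-layer enlargement needed for boundary overlap, produces the stated inequality.
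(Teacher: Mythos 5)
Your proposal follows essentially the same route as the paper's proof: build the corrector from the completeness property of Lemma \ref{proplemma1} so that $\hat{w}^{*}$ differs from $\beta_{i}^{m,n}\hat{w}$ only by a locally supported function whose $\pi$-projection cancels that of the product, exploit $\pi\hat{w}=0$ blockwise to replace $\beta_{i}^{m,n}$ by $\beta_{i}^{m,n}-b_{T}^{n}$ so that only transition-layer blocks contribute, extract the $1/n$ gain from the per-block oscillation of the cutoff, and absorb the commutator terms involving $\nabla\beta_{i}^{m,n}$ via the spectral bound of Lemma \ref{lemma1}. The only cosmetic difference is that the paper phrases the construction through the auxiliary operator ${\pi}'$ (setting $\hat{w}^{*}={\pi}'\left(\beta_{n}\hat{w}\right)-v$) rather than $\hat{w}^{*}=\beta_{i}^{m,n}\hat{w}-v$ directly, so your argument is an equivalent (and if anything cleaner) rendering of the same proof.
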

{\rm Remark: In the rest of the paper, we use the operator ${\pi}': H_{\rm{a}}^1\left(\Omega\right) \rightarrow L^{2}\left(\Omega\right)$.}

\begin{lemma}\label{proplemma3}
Let $ w^{K}\in W_{h}$ be the solution of $\int_{\Omega}\sigma \left ( w^{K} \right ):\varepsilon \left ( v \right ) \,\textnormal{d} x=\mathcal{B}_{K}\left ( v \right ),$ for all $v\in W_{h},$
where $\mathcal{B}_{K}$ is such that $\mathcal{B}_{K}\left ( v \right ) =0$ for all $v\in W_{h,2}\left ( \Omega \backslash K \right ).$ Then there exists a constant $0\leq \delta \leq 1 $ such that
\begin{equation}
    \left \|  w^{K} \right \|_{a\left ( \Omega\backslash K_{i,m} \right )}+\left \|  \pi w^{K} \right \|_{s\left ( \Omega\backslash K_{i,m} \right )}
    \leq \delta ^{m}\left(\left \|  w^{K}  \right \|_{a\left ( \Omega  \right )}+\left \| \pi w^{K}  \right \|_{s\left ( \Omega  \right )}\right).
\end{equation}

\begin{proof}
We denote $\beta_{n}:= \beta _{K}^{m,n}.$ Using the extension of Lemma \ref{proplemma2} gives us that
\begin{equation*}
    \left \| \beta _{n}w^{K}- \hat{w}^{K} \right \|_{a\left ( \Omega \right )}\leq \frac{\left \|  w^{K} \right \|_{a\left ( K_{i,m}\backslash K_{i,m-n}\right )}}{n}, \quad \forall \hat{w}^{K}\in W_{h,2}\left(\Omega\backslash K_{m-n-1}\right).
\end{equation*}
Note that $\hat{w}^{K}\in W_{h,2}\left(\Omega\backslash K\right),$ we also have
\begin{equation}
    \int_{\Omega\backslash K_{i,m-n}}\sigma \left ( w^{K} \right ): \varepsilon \left ( \hat{w} ^{K}\right ) \,\textnormal{d} x=\int_{\Omega}\sigma \left ( w^{K} \right ): \varepsilon \left ( \hat{w} ^{K}\right ) \,\textnormal{d} x=\mathcal{B}_{K}\left ( \hat{w} ^{K} \right )=0.
\end{equation}
Then we obtain
\begin{equation*}
    \begin{aligned}
     &\quad \int_{\Omega \backslash K_{i,m}}\sigma \left ( w^{K} \right ):\varepsilon \left ( w^{K} \right ) \,\textnormal{d} x\\
     &\leq \int_{\Omega \backslash K_{i,m-n}}\beta _{n}\sigma \left ( w^{K} \right ):\varepsilon \left ( w^{K} \right ) \,\textnormal{d} x\\
    &\leq \int_{\Omega \backslash K_{i,m-n}}\sigma \left ( w^{K} \right ):\left ( \varepsilon \left(\beta _{n}\left ( w^{K} \right )\right)-  w^{K} \varepsilon \left ( \beta _{n} \right )\right ) \,\textnormal{d} x\\
    &=\int_{\Omega \backslash K_{i,m}}\sigma \left ( w^{K} \right ):\left ( \varepsilon \left ( \beta _{n} w^{K}-\hat{w}^{K}\right )-\left ( w^{K} -\pi\left ( w^{K} \right )\right )\varepsilon \left ( \beta _{n} \right ) \right ) \,\textnormal{d} x\\
    &\leq \frac{1}{n}\left ( \left \| w^{K}  \right \|_{a\left ( \Omega \backslash K_{i,m-1} \right )}^{2} +\frac{1}{N_{y}}\left \| w^{K}  \right \|_{a\left ( \Omega \backslash K_{i,m-n} \right )}\left \| w^{K}-\pi\left ( w^{K} \right ) \right \|_{a\left ( \Omega \backslash K_{i,m-n} \right )}^{2}\right )\\
    &\leq \frac{\left \| w^{K}  \right \|_{a\left ( \Omega \backslash K_{i,m-n-1} \right )}^{2}}{n}.
    \end{aligned}
\end{equation*}
The following inequality holds,
\begin{equation}
   \left \| w^{K} \right \|_{a\left ( \Omega \backslash K_{i,m} \right )} \leq d \frac{1}{n}\left \| w^{K}  \right \|_{a\left ( \Omega \backslash K_{i,m-n-1} \right )}^{2},
\end{equation}
where $d$ is a constant. By choosing $n:=\left [ \frac{d\cdot 2n}{m-n} \right ],$ we have
\begin{equation*}
    \left \| w^{K} \right \|_{a\left ( \Omega \backslash K_{i,m} \right )} \leq \delta \left \| w^{K}  \right \|_{a\left ( \Omega\right )}^{2},
\end{equation*}
where $\delta =2^{\frac{m\left ( n+1 \right )}{n\left ( m+3 \right )}}.$ We could get the estimate of $ \left \| w^{K}  \right \|_{a\left ( \Omega \backslash K_{i,m} \right )} . $ This ends the proof.
\end{proof}
\end{lemma}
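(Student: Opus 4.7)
The plan is to prove the exponential decay estimate via a cutoff-and-iterate technique, combining the orthogonality $\mathcal{B}_K(v)=0$ on $W_{h,2}(\Omega\setminus K)$ with the localization provided by Lemma \ref{proplemma2}. First I would fix integer parameters $m>n\geq 1$ and take the cutoff function $\beta_n:=\beta_K^{m,n}$ in the standard orientation (equal to one outside $K_{i,m}$ and vanishing near $K$), so that multiplying by $\beta_n$ localizes $w^K$ away from $K$ while leaving it unchanged in the far field. The starting inequality is
\begin{equation*}
\|w^K\|_{a(\Omega\setminus K_{i,m})}^2 \leq \int_\Omega \beta_n\,\sigma(w^K):\varepsilon(w^K)\,dx = a(w^K,\beta_n w^K) - \int_\Omega \sigma(w^K):\mathrm{sym}(w^K\otimes \nabla\beta_n)\,dx,
\end{equation*}
obtained by distributing $\varepsilon$ across the product and symmetrizing the resulting commutator.

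Next I would use Lemma \ref{proplemma2} to produce $\hat w^K\in W_{h,2}(\Omega\setminus K_{i,m-n-1})\subset W_{h,2}(\Omega\setminus K)$ approximating $\beta_n w^K$, with
\begin{equation*}
\|\beta_n w^K-\hat w^K\|_a+\|\pi(\beta_n w^K-\hat w^K)\|_s \leq \tfrac{1}{n}\bigl(\|w^K\|_{a(K_{i,m+2}\setminus K_{i,m-n+2})}+\|\pi w^K\|_{s(K_{i,m+2}\setminus K_{i,m-n+2})}\bigr).
\end{equation*}
Because $\hat w^K$ lies in $W_{h,2}(\Omega\setminus K)$, the hypothesis on $\mathcal{B}_K$ gives $a(w^K,\hat w^K)=\mathcal{B}_K(\hat w^K)=0$, so $a(w^K,\beta_n w^K)=a(w^K,\beta_n w^K-\hat w^K)$, which Cauchy--Schwarz controls by $\|w^K\|_a$ multiplied by the bound above. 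The commutator-type term involving $\nabla\beta_n$ is estimated by $\|\nabla\beta_n\|_{L^\infty}\lesssim 1/n$ times local norms on the same annulus, where Lemma \ref{lemma1} is invoked to absorb the raw $L^2$ part of $w^K$ into the combination of energy and $s$-norms via its spectral bound.

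Assembling these estimates yields the one-step Caccioppoli-type contraction
\begin{equation*}
\|w^K\|_{a(\Omega\setminus K_{i,m})}^2 + \|\pi w^K\|_{s(\Omega\setminus K_{i,m})}^2 \leq \frac{C}{n}\Bigl(\|w^K\|_{a(\Omega\setminus K_{i,m-n-2})}^2+\|\pi w^K\|_{s(\Omega\setminus K_{i,m-n-2})}^2\Bigr).
\end{equation*}
With $n$ fixed large enough that $C/n<1$, this provides a contraction by some $\delta_0<1$ per application; iterating along a nested chain of annuli approximately $m/(n+2)$ times compounds to a bound of the form $\delta^m$ with $\delta\in(0,1)$ independent of $m$, which is exactly the claimed estimate. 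The main obstacle I expect is the careful bookkeeping across the slightly enlarged annular regions $K_{i,m+2}\setminus K_{i,m-n+2}$ forced by Lemma \ref{proplemma2}, and in particular propagating the $\|\pi w^K\|_s$ contribution in parallel with the energy norm through every stage of the iteration; once the one-step contraction is in place, the telescoping argument and the choice of $n$ to optimize the decay rate are routine.
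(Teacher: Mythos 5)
Your proposal follows essentially the same route as the paper's proof: a cutoff function $\beta_K^{m,n}$, the approximant $\hat w^K\in W_{h,2}(\Omega\setminus K_{i,m-n-1})$ supplied by Lemma \ref{proplemma2}, the orthogonality $\mathcal{B}_K(\hat w^K)=0$ to kill $a(w^K,\hat w^K)$, and a Caccioppoli-type one-step contraction iterated over nested annuli to produce $\delta^m$. If anything, your closing step (fix $n$ with $C/n<1$ and iterate roughly $m/(n+2)$ times, carrying the $\|\pi w^K\|_s$ term in parallel) is stated more carefully than the paper's own conclusion, whose choice of $n$ and of $\delta$ is written in a rather confused way, but the underlying argument is the same.
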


{\rm The following lemma shows that the multiscale basis functions for velocity has a similar decay property. We emphasize that in our proof using Lemma \ref{proplemma1}-Lemma \ref{proplemma3} below, one to construct test functions that are based on constraint CEM-GMsFEM, which  differs from the proof of before case.}

\begin{lemma}\label{lemma5}
Let $w^{K}\in W_{h}$ be the solution of
\begin{equation}
    \int_{\Omega }\sigma \left ( w^{K} \right ):\varepsilon \left ( v \right ) \,\textnormal{d} x=\mathcal{B}_{K}\left ( v \right ), \quad  \forall  v \in H_{\rm{a}}^{1}\left(\Omega\right),
\end{equation}
where $\mathcal{B}_{K}\in W_{h,1}$ is such that $\mathcal{B}_{K}\left(v\right)=0$ for $\forall v\in W_{h,2}.$ Moreover, $w_{i,m}^{K}\in W_{h,1}\left(U\left(K_{i,m}\right)\right)$ is denoted as the solution of
\begin{equation}
    \int_{K_{i,m} }\sigma \left ( w_{i,m}^{K} \right ):\varepsilon \left ( v \right ) \,\textnormal{d} x=\mathcal{B}_{K}\left ( v \right ), \quad  \forall  v \in W_{h,1}\left(U\left(K_{i,m}\right)\right).
\end{equation}
Then there exists a positive constant $ 0\leq \delta \leq 1 $ such that
\begin{equation}
    \left \| \sum_{K\in \mathcal{T}_{H}}\left ( w^{K}-w_{i,m}^{K} \right ) \right \|_{a}^{2}+\left \| \sum_{K\in \mathcal{T}_{H}}\pi\left ( w^{K}-w_{i,m}^{K} \right ) \right \|_{s}^{2}
    \leq m^{d}\delta^{m}\left(\sum_{K\in \mathcal{T}_{H}}\left \| w^{K}\right \|_{a}^{2}+\sum_{K\in \mathcal{T}_{H}}\left \| \pi w^{K}\right \|_{s}^{2}\right).
\end{equation}
\end{lemma}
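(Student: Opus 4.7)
The plan is to first bound the error on each coarse block by Galerkin orthogonality combined with a cutoff test function, then invoke Lemma~\ref{proplemma3} to show that the ``tail'' of $w^K$ decays exponentially away from $K$, and finally sum over $K\in\mathcal{T}_H$ using a bounded-overlap argument to produce the polynomial prefactor $m^d$.

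Fix $K\in\mathcal{T}_H$ and observe that $w_{i,m}^K$ is the $a$-Galerkin projection of $w^K$ onto $W_{h,1}(U(K_{i,m}))$, since $\mathcal{B}_K$ agrees on both problems and vanishes on $W_{h,2}$. By the resulting best-approximation property,
\begin{equation*}
    \|w^K - w_{i,m}^K\|_a^2 + \|\pi(w^K - w_{i,m}^K)\|_s^2 \leq \inf_v \bigl(\|w^K - v\|_a^2 + \|\pi(w^K - v)\|_s^2\bigr),
\end{equation*}
with $v$ ranging over admissible test functions. The natural candidate is $v = \beta_i^{m,n}w^K$ for some $n<m$, which has support in $K_{i,m}$, but this in general fails the constraint $\pi(v)=0$ that is built into $W_h$. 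Lemma~\ref{proplemma2} is precisely tailored to remedy this: it produces $\hat{w}^*\in W_{h,2}(\Omega\setminus K_{i,m-n-1})$ so that both $\|\beta_i^{m,n}w^K - \hat{w}^*\|_a$ and $\|\pi(\beta_i^{m,n}w^K - \hat{w}^*)\|_s$ are controlled by $n^{-1}$ times the energy of $w^K$ and $\pi w^K$ on the annulus $K_{i,m+2}\setminus K_{i,m-n+2}$.

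Substituting $v := w^K - \hat{w}^*$ into the best-approximation bound, the per-block error is controlled by $\|w^K\|_a + \|\pi w^K\|_s$ restricted to $\Omega\setminus K_{i,m-n-2}$. Here Lemma~\ref{proplemma3} takes over and produces a factor $\delta^{m-n-2}$. Balancing the two estimates by choosing $n\sim m/2$ yields a per-block bound of the form
\begin{equation*}
    \|w^K - w_{i,m}^K\|_a^2 + \|\pi(w^K - w_{i,m}^K)\|_s^2 \lesssim \tilde\delta^{\,m}\bigl(\|w^K\|_a^2 + \|\pi w^K\|_s^2\bigr),
\end{equation*}
for some $0<\tilde\delta<1$. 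To pass from block-wise to global estimates, I would sum over $K\in\mathcal{T}_H$ and apply Cauchy--Schwarz: each point of $\Omega$ lies in at most $O(m^d)$ of the oversampling supports $U(K_{i,m})$, and this overlap count is precisely what generates the prefactor $m^d$, producing the stated bound with $\delta$ a (possibly relabeled) constant in $(0,1)$.

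The main obstacle will be the first step: constructing $\hat{w}^*$ so that it simultaneously lies in the subspace for the localized variational problem \emph{and} approximates $\beta_i^{m,n}w^K$ well in both the $a$- and $s$-norms. This amounts to combining Lemma~\ref{proplemma1} (completeness of the auxiliary space) with Lemma~\ref{proplemma2} and then tracking carefully how the energy splits between the annulus $K_{i,m}\setminus K_{i,m-n}$, where the cutoff gradient lives, and the exponentially small tail $\Omega\setminus K_{i,m-n-2}$ controlled by Lemma~\ref{proplemma3}, so that the two $n$-dependent factors can be optimized together. Once that decomposition is in place, combining with Lemma~\ref{proplemma3} and executing the final overlap-counting summation become essentially bookkeeping.
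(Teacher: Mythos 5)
Your overall plan is essentially the paper's: use Galerkin orthogonality of the localized problem to reduce to a best-approximation estimate on each block, build a competitor from a cutoff of $w^K$ whose constraint violation is repaired through Lemma \ref{proplemma1} and Lemma \ref{proplemma2}, invoke the decay Lemma \ref{proplemma3} for the far-field tail, and recover the prefactor $m^d$ by a finite-overlap/Cauchy--Schwarz count. The paper performs the overlap step by writing $y=\sum_{K}\bigl(w^K-w^K_{i,m}\bigr)$, splitting it with the cutoff $\beta_K^{m,1}$ into two pieces, and deriving $\left\|y\right\|_a^2\le m^d\bigl(\sum_K\left\|w^K-w^K_{i,m}\right\|_a\bigr)\left\|y\right\|_a$, which is your ``each point lies in $O(m^d)$ patches'' argument in a different guise, so that part of your outline is faithful.

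The one step that does not work as written is the substitution ``$v:=w^K-\hat w^*$'' into the best-approximation bound. The function $\hat w^*$ delivered by Lemma \ref{proplemma2} lies in $W_{h,2}\left(\Omega\setminus K_{i,m-n-1}\right)$, i.e.\ it vanishes on $K_{i,m-n-1}$ and is supported away from $K$; consequently $w^K-\hat w^*$ is not an admissible element of $W_{h,1}\left(U\left(K_{i,m}\right)\right)$, and even ignoring admissibility the resulting error $\left\|w^K-v\right\|_a=\left\|\hat w^*\right\|_a$ is of the size of $\left\|w^K\right\|_a$ near $K$ rather than exponentially small, so no decay can come out of this choice. What the paper actually does at this point is build the competitor directly inside the local constrained space: it uses Lemma \ref{proplemma1} to produce $v'$ with $\pi v'=\pi{\pi}'\bigl(\left(1-\beta_K^{m,1}\right)w^K\bigr)$ and support contained in $K_{i,m}$, sets $\hat w:={\pi}'\bigl(\left(1-\beta_K^{m,1}\right)w^K\bigr)-v'\in W_{h,1}\left(K_{i,m}\right)$, and then estimates $\left\|w^K-\hat w\right\|_a$ by annulus terms plus $\left\|w^K\right\|_{a\left(\Omega\setminus K_{i,m-3}\right)}$, to which Lemma \ref{proplemma3} is applied to obtain $\delta^{2(m-3)}\left\|w^K\right\|_a^2$; no balancing $n\sim m/2$ is needed, since the exponential decay is already encoded in Lemma \ref{proplemma3} (the paper fixes the cutoff index $n=1$). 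You do flag exactly this construction as ``the main obstacle,'' so the gap is localized; with the competitor repaired as above, the remainder of your sketch, including the analogous $s$-norm estimate and the final summation, coincides with the paper's proof.
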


\begin{proof}
Take $y_{m}:=w^{K}-w_{i,m}^{K},$ and $y:=\sum_{K\in \mathcal{T}_{H}} \left ( w^{K}-w_{i,m}^{K} \right ),$
we note that
\begin{equation*}
    \sigma\left(y\right)=\sum_{K\in \mathcal{T}_{H}} \underset{:=\sigma\left ( y_{1} \right )}{\underbrace{\sigma \left(\left (w^{K}-w_{i,m}^{K} \right ) \left ( 1-\beta_{K}^{m,1} \right )y \right )}}+\sum_{K\in \mathcal{T}_{H}}\underset{:=\sigma\left(y_{2} \right )}{\underbrace{ \sigma \left ( \left ( w^{K}-w_{i,m}^{K} \right )\beta_{K}^{m,1}y\right )}}.
\end{equation*}
In the following we solve for each of the two parts:
\begin{equation*}
\begin{aligned}
    \left \| y_1 \right \|_{a}&\leq \left \| y_{m}\right \|_{a}\cdot\left \| y \left ( 1-\beta _{K}^{m,1} \right ) \right \|_{a\left(K_{i,m+1} \right)}\\
    &\leq \left \| y_{m}\right \|_{a}\cdot\left ( \left \| y  \right \|_{a\left ( K_{i,m+1} \right )}+\left \| y\left ( 1-\beta _{K}^{m,1} \right ) \right \|_{a\left ( K_{i,m+1}\backslash K_{i,m}\right )} \right )\\
    &\leq \left \| y_{m}\right \|_{a}\left \| y \right \|_{a\left ( K_{i,m+2} \right )}.
\end{aligned}
\end{equation*}
Then for $\tilde{y}\in W_{h,2},$ we have 
$$ \int _{\Omega }\sigma \left ( y_{m} \right ):\varepsilon \left ( \tilde{y} \right ) \,\textnormal{d} x=\mathcal{B}_{K}\left ( \tilde{y} \right )=0, \text{and}\ \left \| y\beta _{K}^{m,1} -\tilde{y}\right \|_{a\left ( K_{i,m+2} \right )}\leq \left \| y\right \|_{a\left ( K_{i,m+2} \right )}.$$
Hence,
$$\left \| y_{2} \right \|_{a}\leq \left \| y_{m}\right \|_{a}\left \| y \right \|_{a\left ( K_{i,m+2} \right )}$$
Then we note that
\begin{equation*}
    \begin{aligned}
        \left \| y \right \|_{a}^{2}&\leq \sum_{K\in \mathcal{T}_{H}}\left \| y_{m} \right \|_{a}\left \| y \right \|_{a\left ( K_{i,m+2} \right )}\\
        &\leq m^{d}\left ( \sum_{K\in \mathcal{T}_{H}}\left \| y_{m} \right \|_{a} \right )\left \| y \right \|_{a}.
    \end{aligned}
\end{equation*}
Next we estimate $\left \| y_{m} \right \|_{a}.$
Note that
$$\left \| y_{m}\right \|_{a}^{2}\leq \underset{\hat{w}\in W_{h,1}\left ( K_{i,m} \right )}{\text{inf}}\left \| w^{K}-\hat{w} \right \|_{a}^{2},$$
there exists the existence of ${v}'\in H_{\rm{a}}^{1},$ such that
\begin{equation*}
    \begin{aligned}
      \left\{\begin{matrix}
&\pi{v}'=\pi{\pi}'\left ( \left ( 1-\beta _{K}^{m,1} \right )w^{K} \right )\\
&\left \| {v}' \right \|_{a}\leq \left \| \pi{\pi}'\left ( \left ( 1-\beta _{K}^{m,1} \right )w^{K} \right ) \right \|_{a}\\
&\text{supp}\left ( {v}' \right )\subset \text{supp}\left ( \left ( 1-\beta _{K}^{m,1} \right )w^{K} \right )\subset K_{i,m}
\end{matrix}\right.
    \end{aligned}.
\end{equation*}
Next we define $\hat{w}:={\pi}'\left ( \left ( 1-\beta _{K}^{m,1} \right )w^{K} \right )-{v}'\in W_{h,1}\left(K_{i,m} \right ),$ such that
\begin{equation*}
    \begin{aligned}
      &\quad \left \| \pi{\pi}' \left ( \left ( 1-\beta _{K}^{m,1} \right )w^{K} \right )\right \|_{a\left ( K_{i,m} \right )}\\
      &=\left \| \pi{\pi}' \left ( \left ( 1-\beta _{K}^{m,1} \right )w^{K} \right )\right \|_{a\left ( K_{i,m}\backslash K_{i,m-2} \right )}+\left \| \pi{\pi}' \left ( \left ( 1-\beta _{K}^{m,1} \right )w^{K} \right )\right \|_{a\left (K_{i,m-2} \right )}\\
     &=\left \| \pi{\pi}' \left ( \left ( 1-\beta _{K}^{m,1} \right )w^{K} \right )\right \|_{a\left ( K_{i,m}\backslash K_{i,m-2} \right )}+\left \| \pi \left(w^{K} \right )\right\|_{a\left (K_{i,m-2} \right )}\\
     &=\left \| \pi{\pi}' \left( \left( 1-\beta _{K}^{m,1} \right)w^{K} \right)\right \|_{a\left(K_{i,m} \backslash K_{i,m-2} \right)},
    \end{aligned}
\end{equation*}
and $$\left \| \left ( 1-\beta _{K}^{m,1} \right ) w^{K}-{\pi}' \left ( \left ( 1-\beta _{K}^{m,1} \right ) \right )w^{K}\right \|_{a\left ( K_{i,m} \right )}^{2}\leq \left \| w^{K} \right \|_{a\left ( K_{i,m+1}\backslash K_{i,m-2} \right )}.$$
We obtain that
\begin{equation*}
    \begin{aligned}
      \left \| y_{m} \right \|_{a}^{2}&\leq \left \| ( \beta _{K}^{m,1}w^{K}+ \left( 1-\beta _{K}^{m,1} \right )w^{K}- \hat{w}^{K}\right \|_{a}^{2}\\
     &\leq \left \| w^{K} \right \|_{a\left ( \Omega \backslash K_{i,m} \right )}+\left \| w^{K} \right \|_{a\left ( K_{i,m+1}\backslash K_{i,m-2} \right )}+\left \| \hat{w}^{K} \right \|_{a\left ( K_{i,m} \right )}\\
     &\leq \left \| w^{K} \right \|_{a\left ( \Omega \backslash K_{i,m} \right )}+\left \| w^{K} \right \|_{a\left ( K_{i,m+1}\backslash K_{i,m-2} \right )}+\left \| \pi {\pi}'\left ( \left (  1-\beta _{K}^{m,1} \right ) w^{K}\right ) \right \|_{a\left ( K_{i,m} \right )}\\
     &\leq \left \| w^{K} \right \|_{a\left ( \Omega \backslash K_{i,m-2} \right )}+\left \| \pi {\pi}'\left ( \left (  1-\beta _{K}^{m,1} \right ) w^{K}\right ) \right \|_{a\left ( K_{i,m}\backslash K_{i,m-2} \right )}\\
     &\leq \left \| w^{K} \right \|_{a\left ( \Omega \backslash K_{i,m-3} \right )}\\
     &\leq \delta ^{2\left ( m-3 \right )}\left \| w^{K} \right \|_{a}^{2}.
    \end{aligned}
\end{equation*}
Similarly, we have
\begin{equation}
    \left \| \sum_{K\in \mathcal{T}_{H}}\pi\left ( w^{K}-w_{i,m}^{K} \right ) \right \|_{s}^{2}
    \leq m^{d}\delta^{m}\sum_{K\in \mathcal{T}_{H}}\left \| \pi w^{K}\right \|_{s}^{2}.
\end{equation}
We finish this proof.
\end{proof}

\begin{lemma}\label{lemma6}
We choose $\mathcal{H}_{\rm{glo}}^{j} h$ in \myref{equHwhole}, $\mathcal{G}_{\rm{glo}}^{j} g$ in \myref{equglobalu}, $\mathcal{H}_{{\rm cem}}^{j,m} h$ in \myref{equHcem}, and  $\mathcal{G}_{{\rm cem}}^{j,m} h$ in \myref{equGcem},  then
\begin{equation}
    \left \| \left ( \mathcal{H}_{\rm{glo}}- \mathcal{H}_{{\rm cem}} \right )h\right \|_{a}^{2}+\left \| \left ( \mathcal{H}_{\rm{glo}}- \mathcal{H}_{{\rm cem}} \right )h\right \|_{s}^{2}\leq m^{d}\delta ^{2m}\left \| h \right \|_{a}^{2},
\end{equation}
and
\begin{equation}
    \left \| \left ( \mathcal{G}_{\rm{glo}}- \mathcal{G}_{{\rm cem}} \right )g\right \|_{a}^{2}+\left \| \left ( \mathcal{G}_{\rm{glo}}- \mathcal{G}_{{\rm cem}} \right )g\right \|_{s}^{2}\leq \frac{m^{d}\delta ^{2m}}{A}\left \| g \right \|_{L\left ( \Omega \cap \Gamma _{\rm{b}} \right )}^{2},
\end{equation}
where $A=\underset{x\neq 0}{\rm{inf}}\frac{\left \| x \right \|_{a}}{\left \| x \right \|_{L^{2}\left ( \Omega \cap \Gamma _{\rm{b}} \right )}}.$
\end{lemma}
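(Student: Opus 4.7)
The plan is to reduce both inequalities to Lemma \ref{lemma5} followed by a stability bound. Setting $w^K := \mathcal{H}_{\rm{glo}}^K h$ and $w_m^K := \mathcal{H}_{{\rm cem}}^{K,m} h$, I first subtract \myref{equHwhole} and \myref{equHcem} to obtain a saddle-point Galerkin orthogonality: the difference $w^K - w_m^K$ together with its associated Lagrange multiplier satisfies a homogeneous system against test functions supported away from $K$, while the right-hand side bilinear form $\mathcal{B}_K(v) = \int_K \sigma(h):\epsilon(v)\,\textnormal{d} x$ is supported on $K$ and therefore vanishes on $W_{h,2}(\Omega \backslash K)$. This is exactly the hypothesis needed in Lemma \ref{lemma5}, and the cutoff-based argument of that lemma—augmented by Lemma \ref{proplemma1} to produce companion test functions in $V_{\rm{aux}}$ consistent with the constraint equation—carries over to yield the exponential factor $m^{d}\delta^{2m}$ in the combined norm $\|\cdot\|_a^2 + \|\cdot\|_s^2$, weighted by $\sum_K \|w^K\|_a^2 + \sum_K \|\pi w^K\|_s^2$.

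The remaining step is to bound $\sum_K \|w^K\|_a^2 + \sum_K \|\pi w^K\|_s^2 \lesssim \|h\|_a^2$. I would test the first line of \myref{equHwhole} with $z_1 = w^K$ and the second line with $z_2 = \pi w^K$, absorb the Lagrange-multiplier contribution $s(\pi w^K, \theta)$ using the inf-sup bound built into Lemma \ref{proplemma1}, and apply Cauchy--Schwarz on the elasticity pairing to obtain the local estimate $\|w^K\|_a^2 + \|\pi w^K\|_s^2 \lesssim \|h\|_{a(K)}^2$. Summing over $K$ and exploiting the finite overlap of the coarse patches then yields the required global bound, which combined with the decay estimate completes the proof of the first inequality.

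The second inequality for $(\mathcal{G}_{\rm{glo}}-\mathcal{G}_{{\rm cem}})g$ is proved by the identical scheme, replacing the bulk right-hand side $\int_K \sigma(h):\epsilon(v)\,\textnormal{d} x$ by the boundary term $\int_{\partial K \cap \Gamma_{\rm{b}}} g \cdot v\,\textnormal{d} s$. The only substantive change occurs in the stability step, where the boundary integral must be controlled by the energy norm of the test function: this is precisely the role of the constant $A = \inf_{x\neq 0}\|x\|_a/\|x\|_{L^2(\Omega \cap \Gamma_{\rm{b}})}$ introduced in the statement, which produces the factor $1/A$ on the right-hand side.

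The main obstacle will be handling the saddle-point structure carefully during the localization. In Lemma \ref{lemma5} the test functions live in $W_h$, so the Lagrange multiplier is invisible; here, however, $w^K \notin W_h$ because its $\pi$-component is fixed by the constraint, and the multiplier $\theta$ appears in every identity. Consequently the cutoff-function construction from Lemma \ref{proplemma3} must be paired with a consistent companion test in $V_{\rm{aux}}$ (supplied by Lemma \ref{proplemma1}) so that the variational identities survive the localization. Keeping these companion tests supported within the overlap patches, and ensuring that their norms are controlled by the inf-sup constant rather than by $\delta^{m}$, is the technical heart of the argument.
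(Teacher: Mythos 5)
Your plan follows essentially the same route as the paper: bound the global correctors by testing the defining saddle-point problems with the correctors themselves, giving $\left\|\mathcal{H}_{\rm{glo}}^{j}h\right\|_{a}\leq\left\|h\right\|_{a\left(K_{j}\right)}$ and $\left\|\mathcal{G}_{\rm{glo}}^{j}g\right\|_{a}\leq A^{-1/2}\left\|g\right\|_{L^{2}\left(\partial K_{j}\cap\Gamma_{\rm{b}}\right)}$, sum over coarse elements, and combine with the localization decay of Lemma \ref{lemma5} applied to the global--local difference. In fact the paper's written proof records only the stability half and leaves the application of the decay estimate implicit, so your explicit reduction to Lemma \ref{lemma5} (and your attention to the Lagrange-multiplier bookkeeping) fills in exactly the step the paper glosses over.
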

\begin{proof}
    Note that
\begin{equation*}
\begin{aligned}
  a\left(\mathcal{H}_{\rm{glo}}^{j} h, \mathcal{H}_{\rm{glo}}^{j} h\right)+s\left(\pi \mathcal{H}_{\rm{glo}}^{j} h, \lambda\right) &=\int_{K_{j}}  \sigma \left ( h\right ):\epsilon\left ( \mathcal{H}_{\rm{glo}}^{j} h\right )\text{d} x,\\
  s\left ( \mathcal{H}_{\rm{glo}}^{j}h,z_{2} \right )&=\int_{K_{j}}\sigma \left ( h\right ):\epsilon\left ( z_{2}\right )\text{d} x; \\
\end{aligned}
\end{equation*}
we obtain that
\begin{equation*}
    \left \| \mathcal{H}_{\rm{glo}}^{j}h \right \|_{a}^{2}+\left \| \mathcal{H}_{\rm{glo}}^{j}h \right \|_{s}^{2}\leq \left \| \mathcal{H}_{\rm{glo}}^{j}h \right \|_{a\left ( K_j \right )}\left \| h \right \|_{a\left ( K_j \right )},
\end{equation*}
that is
$$\left \| \mathcal{H}_{\rm{glo}}^{j}h \right \|_{a\left ( K_j \right )}\leq \left \| \mathcal{H}_{\rm{glo}}^{j}h \right \|_{a}\leq \left \| h \right \|_{a\left ( K_j \right )}.$$
This yields
\begin{equation*}
    \sum _{K_j\in \mathcal{T}_{H}}\int _{K_j}\sigma \left ( h \right ):\varepsilon \left ( \mathcal{H}_{\rm{glo}}^{j}h \right ) \,\textnormal{d} x\leq \sum _{K_j\in \mathcal{T}_{H}}\left \| h \right \|_{a\left ( K_j \right )}^{2}=\left \| h \right \|_{a}^{2}.
\end{equation*}
 Note that
 \begin{equation*}
\begin{aligned}
a\left(\mathcal{G}_{\rm{glo}}^{j} g, \mathcal{G}_{\rm{glo}}^{j} g\right)+s\left(\pi \mathcal{G}_{\rm{glo}}^{j} g, \lambda\right) &=\int_{\partial K_{j} \cap \Gamma_{\mathrm{b}}} g \cdot \mathcal{G}_{\rm{glo}}^{j} g \mathrm{~d} \sigma,\\
s\left ( \mathcal{G}_{\rm{glo}}^{j}g,z_{2} \right )&=\int_{\partial K_{j} \cap \Gamma_{\mathrm{b}}} g \cdot z_{2} \mathrm{~d} \sigma,
\end{aligned}
\end{equation*}
we obtain $\left \| \mathcal{G}_{\rm{glo}}^{j}h \right \|_{a}\leq \frac{1}{A^{\frac{1}{2}}}\left \| g \right \|_{L^{2}\left ( \partial K_j\cap \Gamma _{\rm{b}} \right )}.$
This yields
\begin{equation*}
    \begin{aligned}
    \sum _{K_j\in \partial K_{j} \cap \Gamma_{\mathrm{b}}}\int_{\partial K_{j} \cap \Gamma_{\mathrm{b}}} g \cdot \mathcal{G}_{\rm{glo}}^{j} g \mathrm{~d} \sigma  &\leq \sum _{K_j\in \partial K_{j} \cap \Gamma_{\mathrm{b}}}\left \| g \right \|_{L^{2}\left ( \partial K_{j} \cap \Gamma_{\mathrm{b}} \right )}\left \| \mathcal{G}_{\rm{glo}}^{j} g  \right \|_{L^{2}\left ( \Gamma _{\rm{b}} \right )}\\
    &\leq \sum _{K_j\in \partial K_{j} }\frac{1}{A}\left \| g \right \|_{L^{2}\left ( \partial K_{j} \cap \Gamma_{\mathrm{b}} \right )}^{2}=\frac{1}{A}\left \| g \right \|_{L^{2}\left ( \Gamma_{\mathrm{b}} \right )}^{2}.
    \end{aligned}
\end{equation*}
\end{proof}
{\rm Finally, we give estimates of the error between the true value and numerical solutions.}

\begin{theorem}
Use the same assumptions and description of symbols as Lemma \ref{lemma1}-Lemma \ref{lemma6}, then we have
\begin{equation}
\begin{aligned}
    \left \| l_{{\rm cem}}^m-\mathcal{H}_{{\rm cem}}^m h+\mathcal{G}_{{\rm cem}}^mg-\tilde{u} \right \|_{a}
    &\leq \Lambda^{-\frac{1}{2}}\left \| f \right \|_{L^{2}\left ( \Omega  \right )}+m^{d}\delta ^{2m}\left(\left \| h \right \|_{a}^{2}+\frac{1}{A}\left \| g \right \|_{L\left ( \Omega \cap \Gamma _{\rm{b}} \right )}^{2}\right)\\
    &\quad +m^{d}\delta^{2m}\left(\sum_{K\in \mathcal{T}_{H}}\left \| l_{\rm{glo}}\right \|_{a}^{2}+\sum_{K\in \mathcal{T}_{H}}\left \|\pi l_{\rm{glo}}\right \|_{s}^{2}\right).
\end{aligned}
\end{equation}
\end{theorem}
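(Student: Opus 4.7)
The plan is to bound the error $\|l_{{\rm cem}}^m-\mathcal{H}_{{\rm cem}}^m h+\mathcal{G}_{{\rm cem}}^mg-\tilde{u}\|_a$ by splitting it through a strategic triangle inequality that introduces the global (non-localized) surrogates, then estimating each piece with the tools already built up in Lemmas \ref{lemma1}--\ref{lemma6} and Theorem \ref{thm:111}. Specifically, I would add and subtract $l_{\rm{glo}}-\mathcal{H}_{\rm{glo}}h+\mathcal{G}_{\rm{glo}}g$ to obtain
\begin{equation*}
\begin{aligned}
\bigl\|l_{{\rm cem}}^m-\mathcal{H}_{{\rm cem}}^m h+\mathcal{G}_{{\rm cem}}^mg-\tilde{u}\bigr\|_{a}
&\leq \bigl\|l_{\rm{glo}}-\mathcal{H}_{\rm{glo}}h+\mathcal{G}_{\rm{glo}}g-\tilde{u}\bigr\|_{a}
+ \bigl\|(\mathcal{H}_{\rm{glo}}-\mathcal{H}_{{\rm cem}}^m)h\bigr\|_{a} \\
&\quad + \bigl\|(\mathcal{G}_{\rm{glo}}-\mathcal{G}_{{\rm cem}}^m)g\bigr\|_{a}
+ \bigl\|l_{\rm{glo}}-l_{{\rm cem}}^m\bigr\|_{a},
\end{aligned}
\end{equation*}
so that the four terms correspond respectively to the global approximation error, the localization error of the Dirichlet correction $\mathcal{H}$, the localization error of the Neumann correction $\mathcal{G}$, and the localization error of the interior solve $l$.

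For the first term, Theorem \ref{thm:111} directly yields the bound $\Lambda^{-1/2}\|f\|_{L^{2}(\Omega)}$ (noting that $u_{\rm{glo}}-u = l_{\rm{glo}}-\mathcal{H}_{\rm{glo}}h+\mathcal{G}_{\rm{glo}}g-\tilde{u}$ by the definition \myref{equglobalu}). For the second and third terms, Lemma \ref{lemma6} gives the $a$-norm bounds $m^{d/2}\delta^{m}\|h\|_a$ and $A^{-1/2}m^{d/2}\delta^{m}\|g\|_{L^2(\Omega\cap\Gamma_{\rm{b}})}$ respectively; squaring after the triangle inequality then matches the structure of the claimed bound.

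The main obstacle is the fourth term $\|l_{\rm{glo}}-l_{{\rm cem}}^m\|_a$, because $l_{\rm{glo}}$ and $l_{{\rm cem}}^m$ are defined only implicitly through the global and localized variational problems \myref{equ:lwhole} and the local formulation on oversampling patches. My plan is to decompose $l_{\rm{glo}}-l_{{\rm cem}}^m = \sum_{K\in\mathcal{T}_H}(w^K-w^K_{i,m})$ where each $w^K\in W_h$ solves a local problem with a right-hand side $\mathcal{B}_K$ supported on $K$ (vanishing on $W_{h,2}(\Omega\setminus K)$) and $w^K_{i,m}\in W_{h,1}(U(K_{i,m}))$ is the corresponding localized version. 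This is precisely the setting of Lemma \ref{lemma5}, which delivers the exponential decay
\begin{equation*}
\bigl\|l_{\rm{glo}}-l_{{\rm cem}}^m\bigr\|_a^{2} + \bigl\|\pi(l_{\rm{glo}}-l_{{\rm cem}}^m)\bigr\|_s^{2}
\leq m^{d}\delta^{2m}\Bigl(\sum_{K\in\mathcal{T}_H}\|w^K\|_a^{2} + \sum_{K\in\mathcal{T}_H}\|\pi w^K\|_s^{2}\Bigr),
\end{equation*}
and the sums on the right can be bounded by $\|l_{\rm{glo}}\|_a^2 + \|\pi l_{\rm{glo}}\|_s^2$ by a stability argument for the global solve.

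Finally, combining the four estimates and absorbing constants yields exactly the bound claimed. The only subtlety to track carefully is that the decomposition of $l_{\rm{glo}}-l_{{\rm cem}}^m$ into local contributions really does satisfy the orthogonality hypothesis $\mathcal{B}_K(v)=0$ for $v\in W_{h,2}(\Omega\setminus K)$ demanded by Lemma \ref{lemma5}; this follows because $\mathcal{B}_K$ is inherited from the variational formulation restricted to $K$ and vanishes on test functions supported away from $K$. Once this is verified, the rest is a straightforward triangle-inequality combination.
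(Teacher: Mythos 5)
Your overall splitting is the same as the paper's: the first term is handled by Theorem \ref{thm:111}, the two corrector terms by Lemma \ref{lemma6}, and the remaining term by Lemma \ref{lemma5}. The genuine gap is in your fourth term. By applying the triangle inequality with the actual discrete solution $l_{{\rm cem}}^m$ you are left with $\left\| l_{\rm{glo}}-l_{{\rm cem}}^m\right\|_{a}$, and this quantity is \emph{not} a pure truncation error: $l_{\rm{glo}}$ and $l_{{\rm cem}}^m$ solve Galerkin problems posed on two different spaces ($V_{\rm{glo}}$ versus $V_{{\rm cem}}$, spanned by the global and the localized multiscale functions respectively) and with two different right-hand sides (one containing $a\left(\mathcal{H}_{\rm{glo}}h,\cdot\right)-a\left(\mathcal{G}_{\rm{glo}}g,\cdot\right)$, the other $a\left(\mathcal{H}_{{\rm cem}}^m h,\cdot\right)-a\left(\mathcal{G}_{{\rm cem}}^m g,\cdot\right)$). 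Consequently the claimed identity $l_{\rm{glo}}-l_{{\rm cem}}^m=\sum_{K\in\mathcal{T}_H}\left(w^K-w_{i,m}^K\right)$, with each pair driven by one common functional $\mathcal{B}_K$, does not hold: even expanding both solutions in corresponding global/localized bases, the two expansions carry different coefficient vectors, and the right-hand-side mismatch reintroduces the corrector differences you already split off. So Lemma \ref{lemma5} cannot be invoked directly on this term, and the ``subtlety'' you flag (checking $\mathcal{B}_K(v)=0$ on $W_{h,2}\left(\Omega\backslash K\right)$) is not where the difficulty lies.

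The paper circumvents this by inserting a quasi-optimality step before splitting: using the orthogonality $a\left(l_{{\rm cem}}^m-\mathcal{H}_{{\rm cem}}^m h+\mathcal{G}_{{\rm cem}}^m g-\tilde{u},\,{l}'_{{\rm cem}}\right)=0$ for elements of the multiscale space, it replaces the computed $l_{{\rm cem}}^m$ by an \emph{arbitrary} competitor ${l}'_{{\rm cem}}$, so that after adding and subtracting the global quantities the last term becomes $\left\| l_{\rm{glo}}-{l}'_{{\rm cem}}\right\|_{a}$ with ${l}'_{{\rm cem}}$ free to be chosen. Choosing ${l}'_{{\rm cem}}$ as the localized counterpart of $l_{\rm{glo}}$ (same coefficients, each global basis function replaced by its truncated version) makes the difference exactly the sum of truncation errors to which Lemma \ref{lemma5} applies, yielding the bound $m^{d}\delta^{2m}\left(\sum_{K\in\mathcal{T}_H}\left\| l_{\rm{glo}}\right\|_{a}^{2}+\sum_{K\in\mathcal{T}_H}\left\|\pi l_{\rm{glo}}\right\|_{s}^{2}\right)$. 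To repair your argument you would need to add this C\'ea-type step (Galerkin orthogonality in $V_{{\rm cem}}$ plus the consistency terms coming from the corrector mismatch, which are again controlled by Lemma \ref{lemma6}); with that insertion your proof coincides with the paper's.
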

\begin{proof}
Note that for ${l}'_{{\rm cem}}\in V_{\rm{glo}},$ we have
$$a\left ( l_{{\rm cem}}^m-\mathcal{H}_{{\rm cem}}^m h+\mathcal{G}_{{\rm cem}}^m g-\tilde{u}, {l}'_{{\rm cem}}\right )=\int _{\Omega }\sigma \left ( l_{{\rm cem}}^m-\mathcal{H}_{{\rm cem}}^m h+\mathcal{G}_{{\rm cem}}^m g-\tilde{u} \right ):\varepsilon \left ( {l}'_{{\rm cem}} \right )=0,$$
and
\begin{equation*}
    \begin{aligned}
    &\quad \left \| l_{{\rm cem}}^m-\mathcal{H}_{{\rm cem}}^m h+\mathcal{G}_{{\rm cem}}^m g-\tilde{u} \right \|_{a}\\
    &=\left \| l_{{\rm cem}}^m-\mathcal{H}_{{\rm cem}}^m h+\mathcal{G}_{{\rm cem}}^m g-\tilde{u}-l_{{\rm cem}}^m+l_{{\rm cem}}^m- {l}'_{{\rm cem}}+{l}'_{{\rm cem}}\right \|_{a}\\
    &\leq\left \| l_{{\rm cem}}^m-\mathcal{H}_{{\rm cem}}^m h+\mathcal{G}_{{\rm cem}}^m g-\tilde{u}-l_{{\rm cem}}^m+{l}'_{{\rm cem}}\right \|_{a}-\left \| l_{{\rm cem}}^m- {l}'_{{\rm cem}} \right \|_{a}\\
    &\leq \left \| {l}'_{{\rm cem}}-\mathcal{H}_{{\rm cem}}^m h+\mathcal{G}_{{\rm cem}}^m g-\tilde{u}\right \|_{a}\\
    &\leq \left \| l_{\rm{glo}}-\mathcal{H}_{\rm{glo}}h+\mathcal{G}_{\rm{glo}}g-\tilde{u}+\left ( \mathcal{H}_{\rm{glo}}h- \mathcal{H}_{{\rm cem}}^m h\right )+\left ( \mathcal{G}_{{\rm cem}}^m g-\mathcal{G}_{\rm{glo}}g \right )-l_{\rm{glo}}+{l}'_{{\rm cem}}\right \|_{a}\\
    & \leq \left \| l_{\rm{glo}}-\mathcal{H}_{\rm{glo}}h+\mathcal{G}_{\rm{glo}}g-\tilde{u} \right \|_{a}+\left \|  \left ( \mathcal{H}_{\rm{glo}}h- \mathcal{H}_{{\rm cem}}^m h\right )\right \|_{a}\\
    &\quad +\left \| \left ( \mathcal{G}_{{\rm cem}}^m g-\mathcal{G}_{\rm{glo}}g \right ) \right \|_{a}+\left \| l_{\rm{glo}}-{l}'_{{\rm cem}}\right \|_{a}\\
    &\leq \left ( \Lambda \right )^{-\frac{1}{2}}\left \| f \right \|_{L^{2}\left ( \Omega  \right )}+m^{d}\delta ^{2m}\left \| h \right \|_{a}^{2}+\frac{m^{d}\delta ^{2m}}{A}\left \| g \right \|_{L\left ( \Omega \cap \Gamma _{\rm{b}} \right )}^{2}+\left \| l_{\rm{glo}}-{l}'_{{\rm cem}} \right \|_{a}.
    \end{aligned}\rm{glo}
\end{equation*}
Then it is enough to estimate $\left \| l_{\rm{glo}}-{l}'_{{\rm cem}} \right \|_{a}.$
Combining the Lemma \ref{lemma5}, we have
\begin{equation*}
\left \| l_{\rm{glo}}-{l}'_{{\rm cem}} \right \|_{a}
\leq m^{d}\delta^{2m}\left(\sum_{K\in \mathcal{T}_{H}}\left \| l_{\rm{glo}}\right \|_{a}^{2}+\sum_{K\in \mathcal{T}_{H}}\left \|\pi l_{\rm{glo}}\right \|_{s}^{2}\right).
\end{equation*}
This proof this theorem.
\end{proof}
{\rm Moreover, from the proof of \autoref{thm:111}  and \myref{equ:lwhole}, we have }
\begin{equation*}
\begin{aligned}
\left\| l_{\rm{glo}}\right\|_{a}+\left\|\pi l_{\rm{glo}}\right\|_{s} &=\left\| l_{\rm{glo}}\right\|_{a}+\left\|\pi\left(\tilde{u}+\mathcal{H}_{\text {cem }} h-\mathcal{G}_{\rm{glo}} g\right)\right\|_{s}\\
&\leq \left\| l_{\rm{glo}}\right\|_{a}+\left\|\pi \tilde{u}\right\|_{s}+\left\|\pi \mathcal{H}_{\rm{glo}} h\right\|_{s}+\left\|\pi \mathcal{H}_{\rm{glo}} g\right\|_{s} \\
& \leq O\left(H^{-1}\right)+O\left(1\right).
\end{aligned}
\end{equation*}

\section{Numerical Results}\label{sub:4}
{\rm In this section, we present three different model problems and the corresponding numerical results and discuss these three problems respectively, to emphasize that the boundary corrector proposed in this paper can maintain accuracy at high contrast factor settings. The high-contrast model where we choose three fixed-value Young’s modulus $E\left(x\right)$ is shown in \autoref{fig:Medium config 1}, \autoref{fig:Medium config 2} and \autoref{fig:Medium config 3}  for $2D$ to verify the corrective effects of  Dirichlet correctors, Neumann  correctors and the combination of these two correctors, respectively.
We give the definition of $\lambda\left(x\right)$ and $\mu\left(x\right)$ as following:
\begin{equation*}
\begin{aligned}
    \lambda\left(x\right)&=\frac{\nu}{\left(1+\nu\right)\left(1-2 \nu\right)} E\left(x\right),\\ \mu\left(x\right)&=\frac{1}{2\left(1+\nu\right)} E\left(x\right),
\end{aligned}
\end{equation*}
where the Poisson’s ratio $\nu$ is $0.25$.  We let $u_{h}$ denote the fine scale reference given by \myref{equ:galerkin} and let $u_{{\rm cem}}$ denote the CEM-GMsFEM approximation given by \myref{equcemu}.
In the following experiments, we set $D =\left [0,1  \right ]$.

In order to clearly state experimental results , \autoref{fig:symbol} gives some notation as follows.
\begin{table}[H]
\setlength{\abovecaptionskip}{0pt}
\setlength{\belowcaptionskip}{10pt}
\centering
\topcaption{Simplified description of symbols.}
\label{fig:symbol}
\begin{tabular}{l|llllll}
\cline{1-2}
Parameters                                        & Symbols &  &  &  &  &  \\ \cline{1-2}
Number of oversampling coarses                    & $Noc$      &  &  &  &  &  \\
Number of basis functions in every coarse element & $Nbf $    &  &  &  &  &  \\
Length of every coarse element size               & $L $      &  &  &  &  &  \\
Contrast value                                    & $E$      &  &  &  &  &  \\ \cline{1-2}
\end{tabular}
\end{table}


\subsection{Model problem 1}\label{sub:model1}
In the first model problem, we take the configuration of Young's modulus $E\left(x_1, x_2\right)$ and Poisson's ratio $\nu\left(x_1, x_2\right)$ as \autoref{fig:Medium config 1}. We fix $E=1.0$ and $\nu=0.25$ for the matrix. For inclusions, we choose $E$ either $10^4$, $10^5$ or $10^6$, while $\nu=0.45$ for all experiments. We emphasize that high contrast of $E$ determines the main difficulty of the problem. The Dirichlet BCs are set as
\begin{equation}
u\left(x_1, x_2\right)=h\left(x_1, x_2\right)=\left(\begin{aligned}
&x_1 + \exp\left(x_1x_2\right) \\
&\cos\left(x_1\right)\cos\left(x_2\right)
\end{aligned}\right)\label{model1}
\end{equation}
on $\partial \Omega$. and the source function $f=\left(f^1, f^2\right)$, where
\[f^1\left(x_1,x_2\right) = \begin{cases}
1.0, & \text{ if } \left(x_1, x_2\right) \in \left(\frac{1}{8}, \frac{7}{8}\right) \times \left(\frac{3}{8}, \frac{5}{8}\right) \cup \left(\frac{3}{8}, \frac{5}{8}\right) \times \left(\frac{1}{8}, \frac{7}{8}\right), \\0.0, & \text{ otherwise},
\end{cases}\]
and $f^2\left(x_1, x_2\right)=0$ for all $\left(x_1,x_2\right)\in D$.

\begin{figure}[h]
\centering
\includegraphics[width=0.5\textwidth]{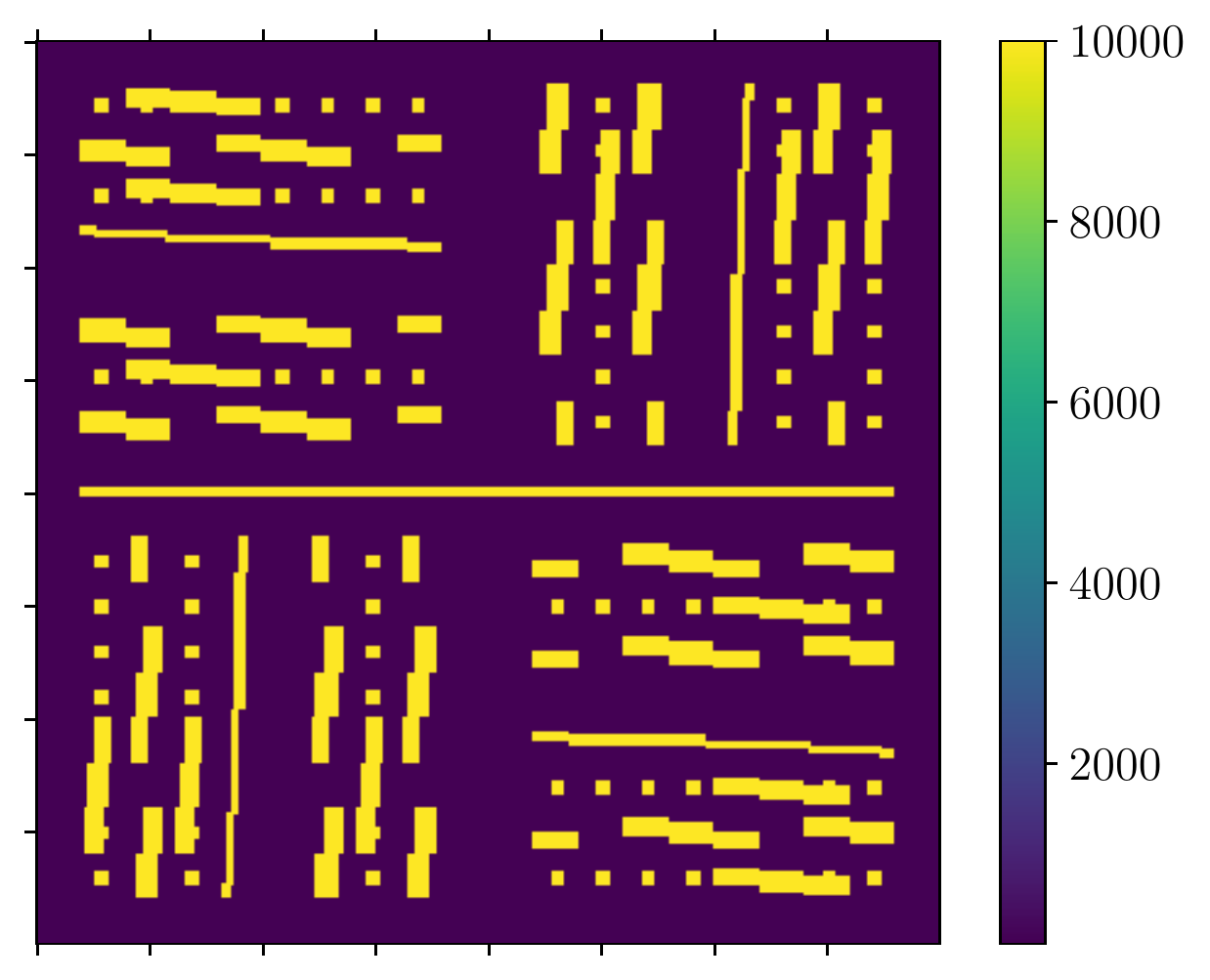}
\caption{The medium configure of model problem 1.} \label{fig:Medium config 1}
\end{figure}

We first check the decay of $\mathcal{H}^m_\textnormal{cem}h$ with respect to different oversampling layers $m$. We set the coarse grid as $64\times 64$, and $\dim \left(V_i^{\rm{aux}}\right)=3$ for all $i$.
We first examine the exponential convergence of $\left(\mathcal{H}_{\rm cem}^{m}-\mathcal{H}_{\rm{glo}}\right) h$ by setting $L=1 / 20$ and $Nbf=2$. The results are reported in \autoref{table1}, where we introduce notations
\begin{equation*}
\left\|u_{h}-u_{\mathrm{cem}}\right\|_{H^{1}\left(\Omega\right)}^{\mathrm{rel-H}}:=\frac{\left\|\mathcal{H}^m_{\textnormal{cem}}h-\mathcal{H}_{\rm{glo}}h\right\|_{a}} {\left\|\mathcal{H}_{\rm{glo}h}\right\|_{a}}
\end{equation*}
and
\begin{equation*}
\quad \left\|u_{h}-u_{\mathrm{cem}}\right\|_{L^{2}\left(\Omega\right)}^{\mathrm{rel-H}}:=\frac{\left\|\mathcal{H}^m_\textnormal{cem}h-\mathcal{H}_{\rm{glo}}h\right\|_{L^{2}(\Omega)}}{\| \mathcal{H}_{\rm{glo}}h \|_{L^{2}\left(\Omega\right)}}
\end{equation*}
to measure errors.

As shown by left of \autoref{fig:errors1}, we can see that the decay rates of $\mathcal{H}^m_{\rm cem}h-\mathcal{H}_{\rm glo}h$ are almost indistinguishable with respect to different $E\in \{10^4, 10^5, 10^6\}$. We could observe an exponential decay pattern, which convinces the estimate in Lemma \ref{lemma6}.


We check the numerical errors of our methods by right of  \autoref{fig:errors1}. As the number of oversampling layers increases from $1$ to $3$, we see the same independent relationship between error and contrast, but as the number of oversampling layers increases to $4$, the error decreases as the contrast increases from $10^4$ to $10^6$, indicating that our method is very effective and accurate in a high contrast model.


\begin{figure}[h]
\centering
\includegraphics[width=1.0\textwidth]{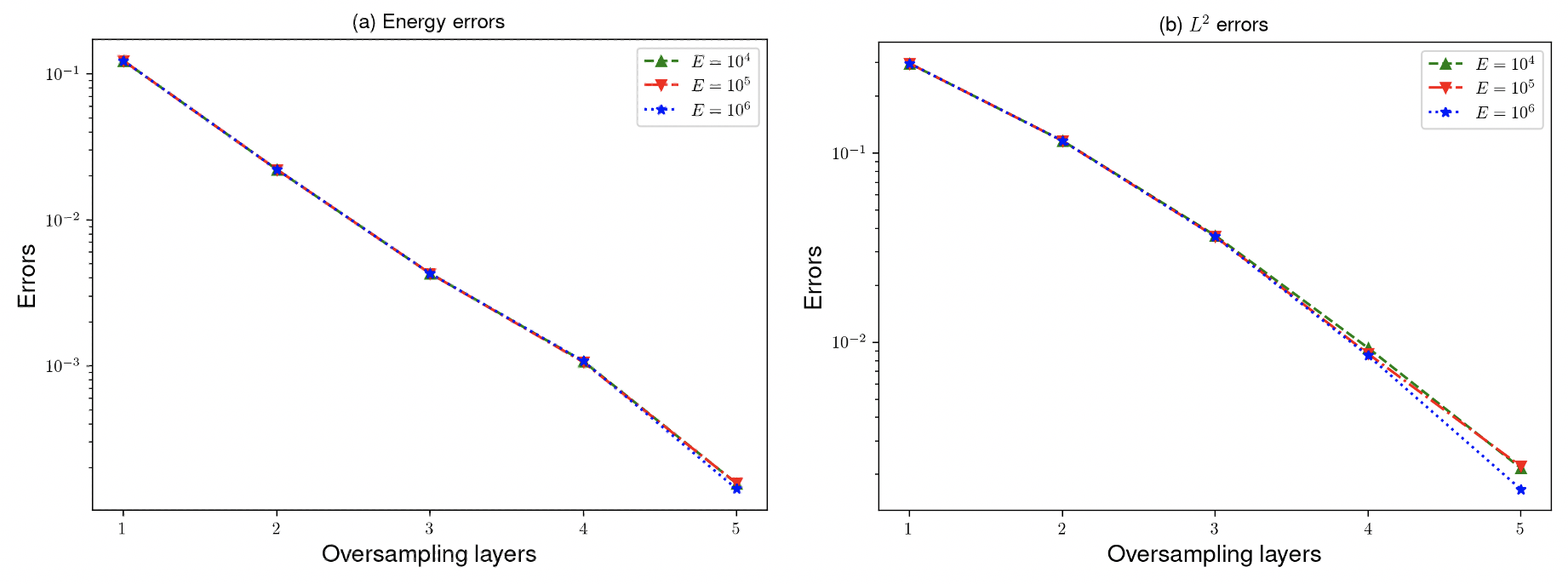}
\caption{(a) Energy norm of relative errors between $\mathcal{H}^m_\textnormal{cem}h$ and  $\mathcal{H}_{\rm{glo}}h$ with respect to different oversampling layers $m$ and Young's modulus $E$, (b) $L^2$ norm of relative errors  between $\mathcal{H}^m_\textnormal{cem}h$ and  $\mathcal{H}_{\rm{glo}}h$ with respect to different oversampling layers $m$ and Young's modulus $E$.} \label{fig:errors1}
\end{figure}



\begin{table}[H]
\centering
\topcaption{The Dirichlet correctors of the model problem \myref{model1} in the energy $H^1$ and $L^{2}$ norm with different $Noc$ and different $E $, while $Nbf=2$ and $L=1/20.$}
\label{table1}
\begin{tabular}{|c|c|c|c|}
\hline
$E$                    & $Noc$ & $\left\|u_{h}-u_{\mathrm{cem}}\right\|_{H^{1}\left(\Omega\right)}^{\mathrm{rel-H}} $ & $\left\|u_{h}-u_{\mathrm{cem}}\right\|_{L^{2}\left(\Omega\right)}^{\mathrm{rel-H}} $ \\ \hline
\multirow{6}{*}{$10^4$} &1 & 12.31\% & 29.66\% \\ \cline{2-4}
     & 2   & 2.22\% & 11.60\% \\ \cline{2-4}
     & 3   & 0.43\%  & 3.66\% \\ \cline{2-4}
     & 4   & 0.11\%  &  0.93\%  \\ \cline{2-4}
     & 5   & 0.02\% &  0.22\% \\ \cline{2-4}    
     & Ref   & 117.02\% &  68.38\%  \\  \hline
\multirow{6}{*}{$10^5$}& 1   & 12.31\% & 29.67\% \\ \cline{2-4}
      & 2   & 2.22\% & 11.59\%\\ \cline{2-4}
      & 3   & 0.43\% & 3.61\%  \\ \cline{2-4}
      & 4   & 0.11\%  & 0.87\%   \\ \cline{2-4}
      & 5   & 0.02\%  &  0.22\% \\ \cline{2-4}
     & Ref   & 37.01\% &  21.62\%   \\  \hline
\multirow{6}{*}{$10^6$}& 1   & 12.31\% & 29.68\% \\ \cline{2-4}
     & 2   & 2.22\% & 11.59\% \\ \cline{2-4}
     & 3   & 0.43\% & 0.361\%  \\ \cline{2-4}
     & 4   & 0.11\%  &  0.85\%  \\ \cline{2-4}
     & 5   & 0.02\%  &  0.17\% \\ \cline{2-4}
     & Ref   & 11.70\% &  6.83\%  \\ \hline
\end{tabular}
\end{table}

\subsection{Model problem 2}\label{5.2}

In this subsection, we study the following inhomogeneous Neumann BVP:
\begin{equation}
\left\{\begin{array}{lr}
u\left(x_{1}, x_{2}\right)=\left (0.0,0.0\right )^T, & \forall\left(x_{1}, x_{2}\right) \in\left(0,1\right) \times\{1\}, \\
 \sigma\left(u\left ( x_{1} ,x_{2}\right )\right)\cdot n=g\left(x_{1}, x_{2}\right)=\left (-1.0,0.0\right )^T, & \forall\left(x_{1}, x_{2}\right) \in\{0\} \times\left(0,1\right), \\
 \sigma\left(u\left ( x_{1} ,x_{2}\right )\right)\cdot n=g\left(x_{1}, x_{2}\right)=\left (1.0,0.0\right )^T, & \forall\left(x_{1}, x_{2}\right) \in\{1\} \times\left(0,1\right) ,\\
 \sigma\left(u\left ( x_{1} ,x_{2}\right )\right)\cdot n=g\left(x_{1}, x_{2}\right)=\left (1.0,0.0\right )^T, & \forall\left(x_{1}, x_{2}\right) \in\left(0,0.5\right) \times\{0\}, \\
 \sigma\left(u\left ( x_{1} ,x_{2}\right )\right)\cdot n=g\left(x_{1}, x_{2}\right)=\left (0.0,0.0\right )^T, & \forall\left(x_{1}, x_{2}\right) \in\left(0.5,1\right) \times\{0\}.
\end{array}\right.\label{equ:model2}
\end{equation}

In the second model problem, we take the configuration of Young’s modulus $E\left(x_1, x_2\right)$ and Poisson’s ratio
$\nu\left(x_1, x_2\right)$ as \autoref{fig:Medium config 2}.

\begin{figure}[h]
\centering
\includegraphics[width=0.5\textwidth]{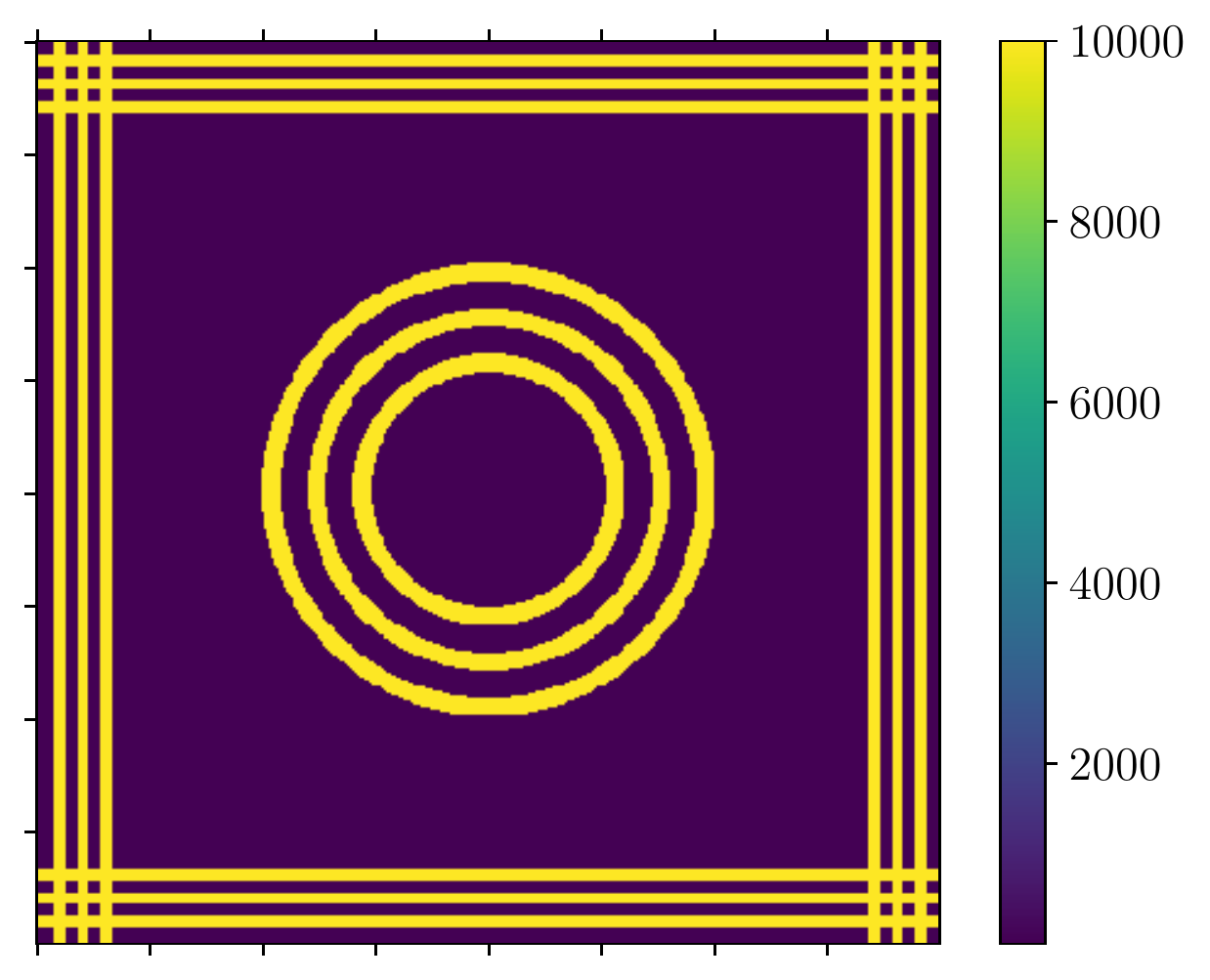}
\caption{The medium configure of model problem 2.} \label{fig:Medium config 2}
\end{figure}

We check the decay of $\mathcal{G}^m_\textnormal{cem}g$ with respect to different oversampling layers $m$. We set the coarse grid as $64\times 64$, and $\dim \left(V_{i}^{\rm aux}\right)=3$ for all $i$.
We examine the exponential convergence of $\left(\mathcal{G}^{m}-\mathcal{G}^{\text {glo}}\right) g$ by setting $L=1/20$ and $Nbf=2$. The results are reported in \autoref{table2}, where we introduce notations
\begin{equation*}
\left\|u_{h}-u_{\mathrm{cem}}\right\|_{H^{1}\left(\Omega\right)}^{\mathrm{rel-G}}:=\frac{\left\|\mathcal{G}^m_\textnormal{cem}g-\mathcal{G}_{\rm{glo}}g\right\|_{a}}{\|\mathcal{G}_{\rm{glo}}g\|_{a}}
\end{equation*}
and
\begin{equation*}
 \left\|u_{h}-u_{\mathrm{cem}}\right\|_{L^{2}\left(\Omega\right)}^{\mathrm{rel-G}}:=\frac{\left\|\mathcal{G}^m_\textnormal{cem}g-\mathcal{G}_{\rm{glo}}g\right\|_{L^{2}\left(\Omega\right)}}{\| \mathcal{G}_{\rm{glo}}g \|_{L^{2}\left(\Omega\right)}}
 \end{equation*}
to measure errors.

\begin{figure}[h]
\centering
\includegraphics[width=1.0\textwidth]{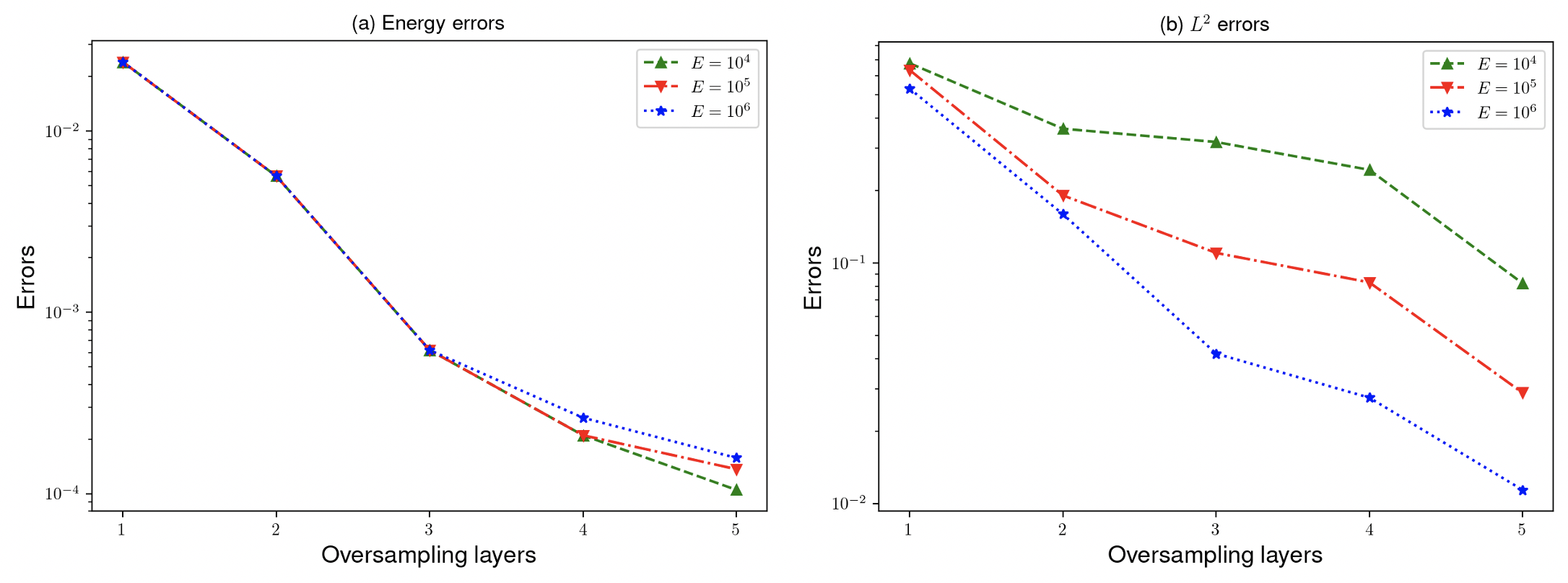}
\caption{(a) Energy norm of relative errors between $\mathcal{G}^m_\textnormal{cem}g$ and  $\mathcal{G}_{\rm{glo}}g$ with respect to different oversampling layers $m$ and Young's modulus $E$, (b) $L^2$ norm of relative errors between $\mathcal{G}^m_\textnormal{cem}g$ and  $\mathcal{G}_{\rm{glo}}g$ with respect to different oversampling layers $m$ and Young's modulus $E$} \label{fig:errors3}
\end{figure}



This experiment serves the similar purpose as the experiment in Subsection \ref{sub:model1} and it further validates that the accuracy of our proposed method increases when the oversampling region is expanded. In this experiment, we let $H = 1/80 $ and $Nbf= 2$, and similar error proficiency results to those in \autoref{table2} can be observed in the results in \autoref{table1}. We visualize the results of \autoref{table2} in \autoref{fig:errors3}. This test shows that the accuracy of our proposed method increases as the contrast increases and also verifies the applicability of our method to high contrast materials.

\begin{table}[H]
\centering
\topcaption{The Neumann correctors of the model problem \myref{equ:model2} in the energy $H^1$ and $L^{2}$ norm with different $Noc$ and different $E $, while $Nbf=2$ and $H=1/20.$}
\label{table2}
\begin{tabular}{|c|c|c|c|}
\hline
$E$                    & $Noc$ & $\left\|u_{h}-u_{\mathrm{cem}}\right\|_{H^{1}\left(\Omega\right)}^{\mathrm{rel-G}} $ & $\left\|u_{h}-u_{\mathrm{cem}}\right\|_{L^{2}\left(\Omega\right)}^{\mathrm{rel-G}}$ \\ \hline
\multirow{6}{*}{$10^4$} &1 & 2.40\% & 67.54\% \\ \cline{2-4}
     & 2   & 0.56\% &  36.10\% \\ \cline{2-4}
     & 3   & 0.06\%  & 31.79\%   \\ \cline{2-4}
     & 4   & $<1.00\times10^{-3}$  &  24.36\%  \\ \cline{2-4}
     & 5   & $<1.00\times10^{-3}$    & 8.22\%     \\ \cline{2-4}
     & Ref   & 57.25\%  & 9.53\%   \\ \hline
\multirow{6}{*}{$10^5$} & 1   & 2.40\% &  63.28\%\\ \cline{2-4}
      & 2   & 0.56\% & 19.07\%  \\ \cline{2-4}
      & 3   & 0.06\% &  11.00\%   \\ \cline{2-4}
      & 4   & $<1.00\times10^{-3}$  &  8.30\%  \\ \cline{2-4}
      & 5   & $<1.00\times10^{-3}$   &  2.88\%     \\ \cline{2-4}
     & Ref   & 54.29\%  & 8.26\%  \\ \hline
\multirow{6}{*}{$10^6$} & 1   & 2.40\% & 52.99\% \\ \cline{2-4}
     & 2   & 0.56\% &  15.94\%  \\ \cline{2-4}
     & 3   & 0.06\% &  4.20\% \\ \cline{2-4}
     & 4   & $<1.00\times10^{-3}$  & 2.75\%  \\ \cline{2-4}
     & 5   & $<1.00\times10^{-3}$    &    1.14\%   \\ \cline{2-4}
     & Ref   & 53.97\%  & 6.22\%   \\ \hline
\end{tabular}
\end{table}


\subsection{Model problem 3}
In this subsection, we consider a more complex inhomogeneous Neumann BVP than \myref{equ:model2} in \autoref{fig:Medium config 3}.

\begin{figure}[h]
\centering
\includegraphics[width=0.5\textwidth]{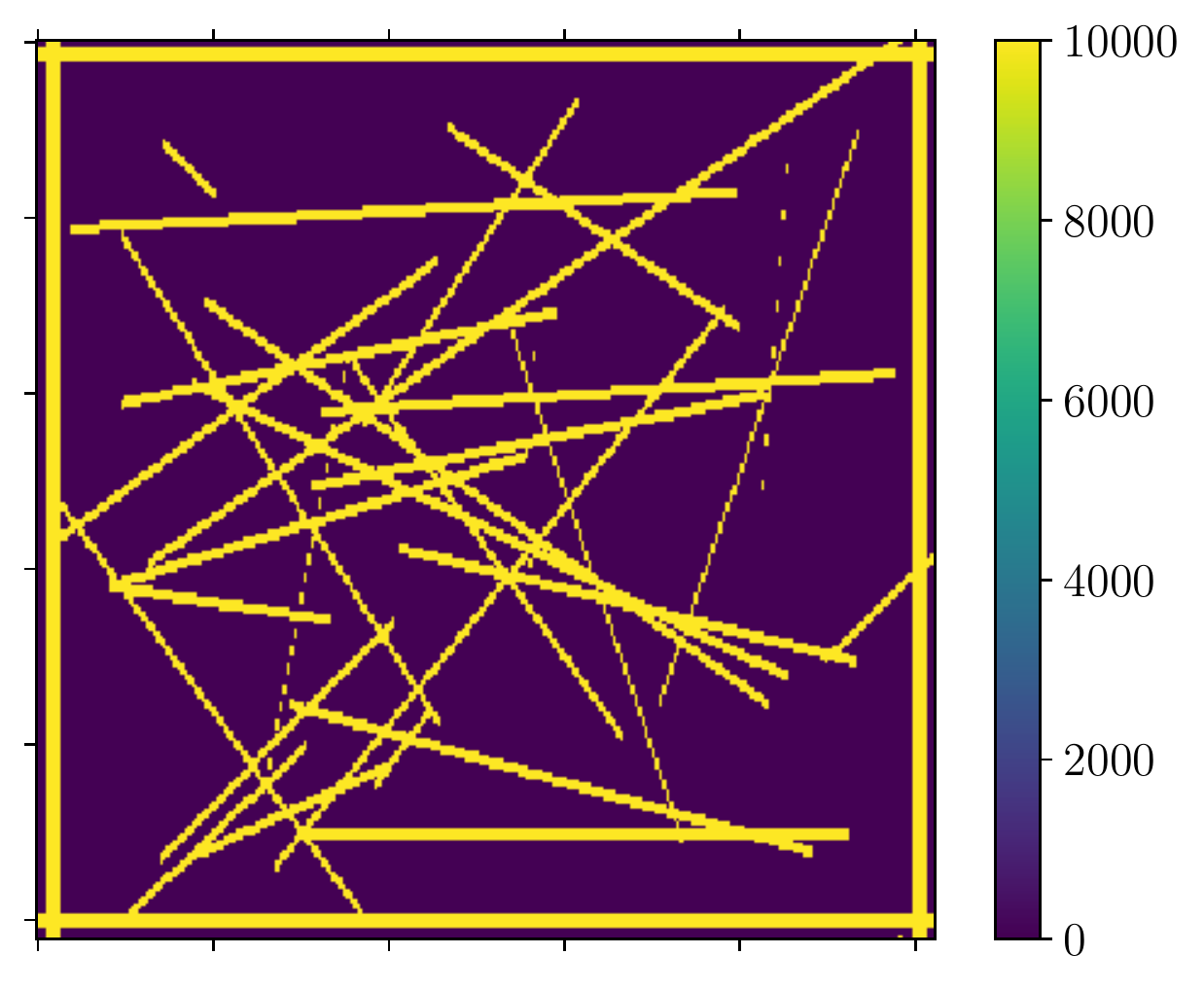}
\caption{The medium configure of model problem 3.} \label{fig:Medium config 3}
\end{figure}

Two experiments are carried out in this part, one to confirm the validity of the Neumann correctors and the other to see how effectively the Neumann and Dirichlet correctors work together to enhance our numerical findings. We introduce notations
\begin{equation*}
\left\|u_{h}-u_{\mathrm{cem}}\right\|_{H^{1}\left(\Omega\right)}:=\frac{\left\| l_{\rm{cem}}^{m}-\mathcal{H}^m_\textnormal{cem}h-\tilde{u} \right\|_{a}} {\|\mathcal{H}_{\rm{glo}}h\|_{a}}
\end{equation*}
and
\begin{equation*}
\quad \left\|u_{h}-u_{\mathrm{cem}}\right\|_{L^{2}\left(\Omega\right)}:=\frac{\left\|l_{\rm{cem}}^{m}-\mathcal{H}^m_\textnormal{cem}h-\tilde{u} \right\|_{L^{2}\left(\Omega\right)}}{\| \mathcal{H}_{\rm{glo}}h \|_{L^{2}\left(\Omega\right)}}
\end{equation*}
to measure errors under $E=10^4$  and $Nbf=2$. The results are reported in \autoref{table4} and \autoref{table5}. We visualize the results of \autoref{table4} in \autoref{fig:errors8}, and \autoref{table5} in \autoref{fig:errors6} and \autoref{fig:errors5}.

\begin{figure}[h]
\centering
\includegraphics[width=1.0\textwidth]{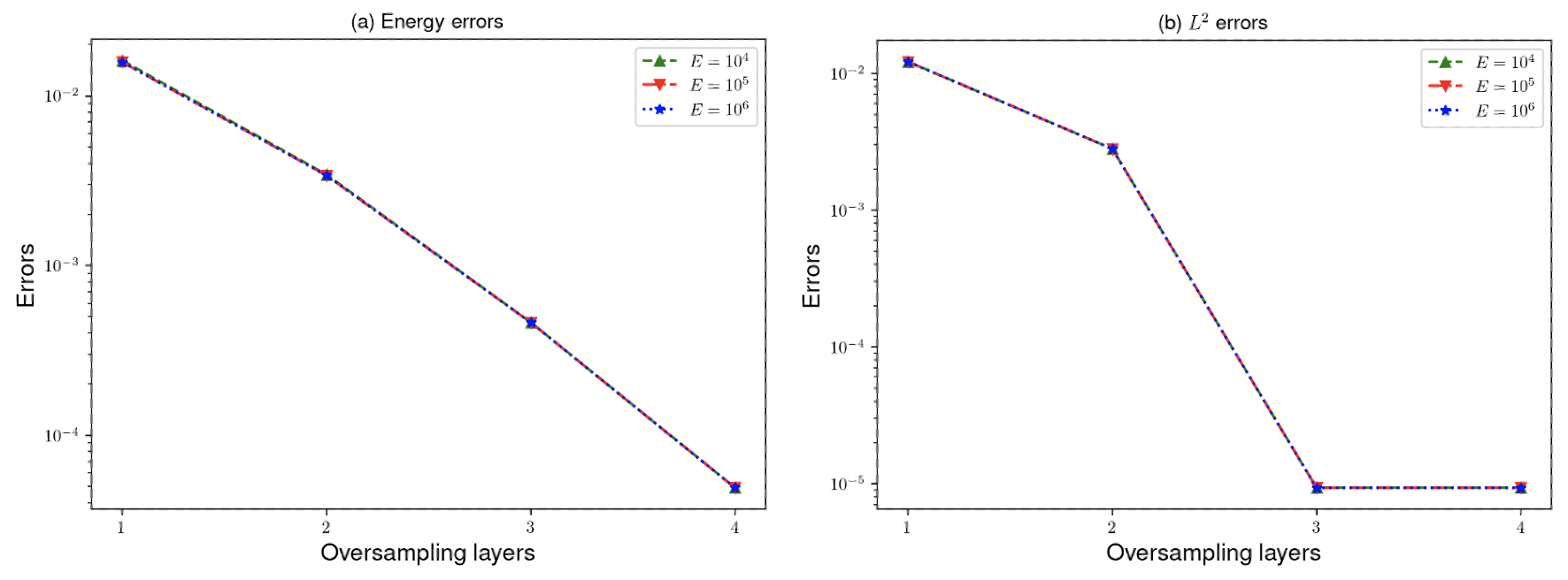}
\caption{(a) Energy norm of relative errors between $\mathcal{G}^m_\textnormal{cem}g$ and  $\mathcal{G}_{\rm{glo}}g$ with respect to different oversampling layers $m$ and Young's modulus $E$, (b) $L^2$ norm of relative errors between $\mathcal{G}^m_\textnormal{cem}g$ and  $\mathcal{G}_{\rm{glo}}g$ with respect to different oversampling layers $m$ and Young's modulus $E$.} \label{fig:errors8}
\end{figure}



\begin{table}[H]
\centering
\topcaption{The Neumann correctors of the model problem in \autoref{fig:Medium config 3} in the energy $H^1$ and $L^{2}$ norm with different $Noc$ and different $E $, while $Nbf=2$ and $H=1/20.$}
\label{table4}
\begin{tabular}{|c|c|c|c|}
\hline
$E$   & $Noc$  & $\left\|u_{h}-u_{\mathrm{cem}}\right\|_{H^{1}\left(\Omega\right)}^{\mathrm{rel-G}} $ & $\left\|u_{h}-u_{\mathrm{cem}}\right\|_{L^{2}\left(\Omega\right)}^{\mathrm{rel-G}}$ \\ \hline
\multirow{6}{*}{$10^4$} &1 & 0.23\% &$<1.00\times10^{-4}$\\ \cline{2-4}
     & 2   & 0.05\% &  $<1.00\times10^{-5}$ \\ \cline{2-4}
     & 3   & $<1.00\times10^{-4}$  & 0.00\%   \\ \cline{2-4}
     & 4   & $<1.00\times10^{-4}$  &  0.00\%  \\ \cline{2-4}
     & ref   & 14.26\%    & 0.11\%     \\ \hline
\multirow{6}{*}{$10^5$} & 1   & 0.23\% &  $<1.00\times10^{-4}$\\ \cline{2-4}
      & 2   & 0.05\% & $<1.00\times10^{-5}$  \\ \cline{2-4}
      & 3   & $<1.00\times10^{-4}$ &  0.00\%   \\ \cline{2-4}
      & 4   & $<1.00\times10^{-4}$  &  0.00\%  \\ \cline{2-4}
      & ref   & 14.29\%   &  0.11\%      \\ \hline
\multirow{6}{*}{$10^6$} &1   &0.23\% & $<1.00\times10^{-4}$ \\ \cline{2-4}
     & 2   & 0.05\% & $<1.00\times10^{-5}$ \\ \cline{2-4}
     & 3   & $<1.00\times10^{-4}$ &  0.00\% \\ \cline{2-4}
     & 4   & $<1.00\times10^{-4}$  & 0.00\% \\ \cline{2-4}
     & ref   & 14.29\%    &    0.11\%     \\ \hline
\end{tabular}
\end{table}

In first experiment, as shown in \autoref{fig:errors8},  we discovered that for fixed $Nbf=2$ and $H=1/20$, the error results remained very low even as the contrast increased, demonstrating that our method is successful in achieving efficient solutions to high contrast problems under Neumann correctors,  and these findings corroborate the Neumann BVPs discussed in Subsection \ref{5.2}.

\begin{figure}[h]
\centering
\includegraphics[width=0.5\textwidth]{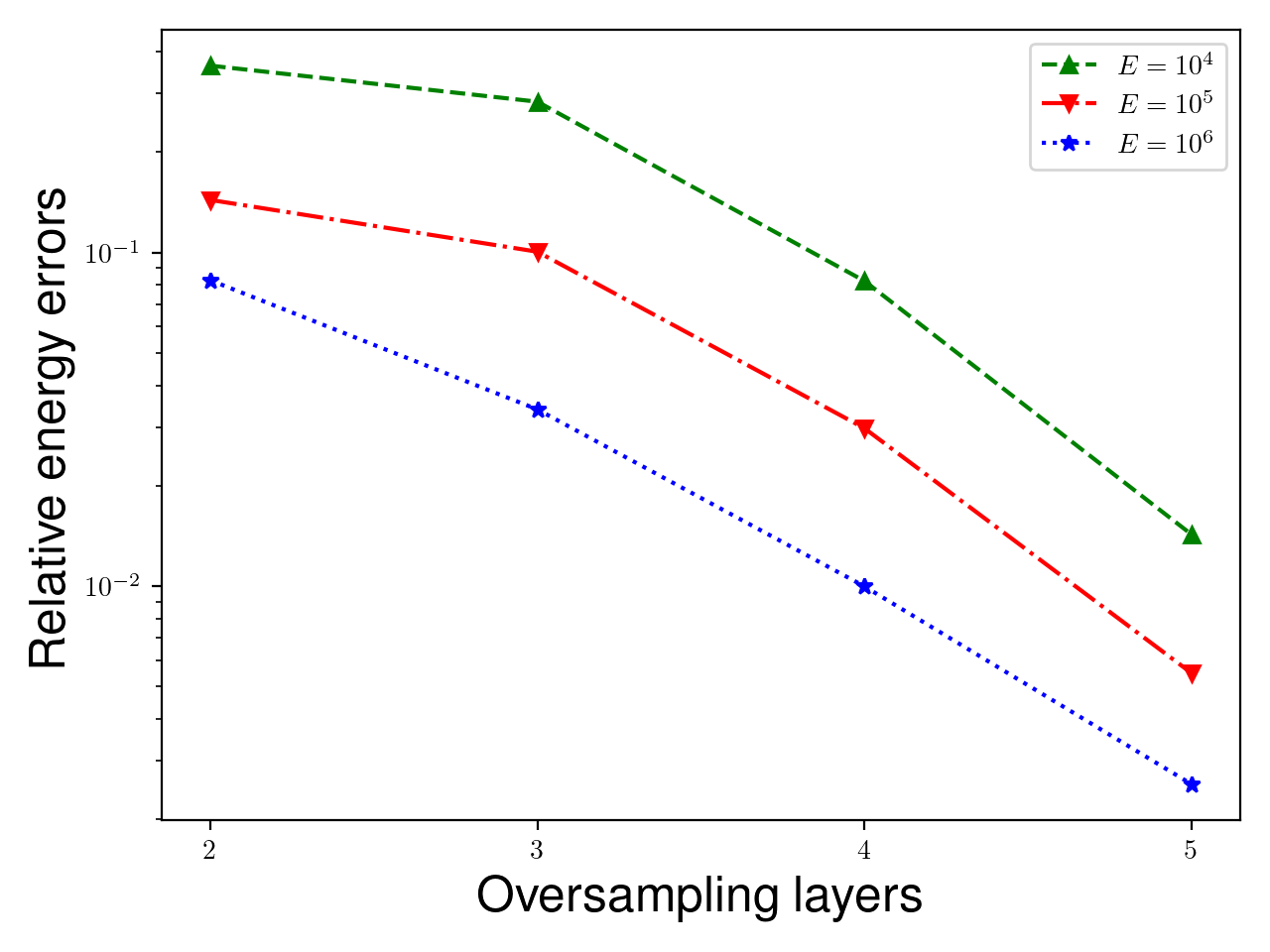}
\caption{Energy norm of relative errors of numerical solutions of Neumann BVPs with respect to different oversampling layers $m$ and Young's modulus $E$. } \label{fig:errors6}
\end{figure}

\begin{figure}[h]
\centering
\includegraphics[width=0.5\textwidth]{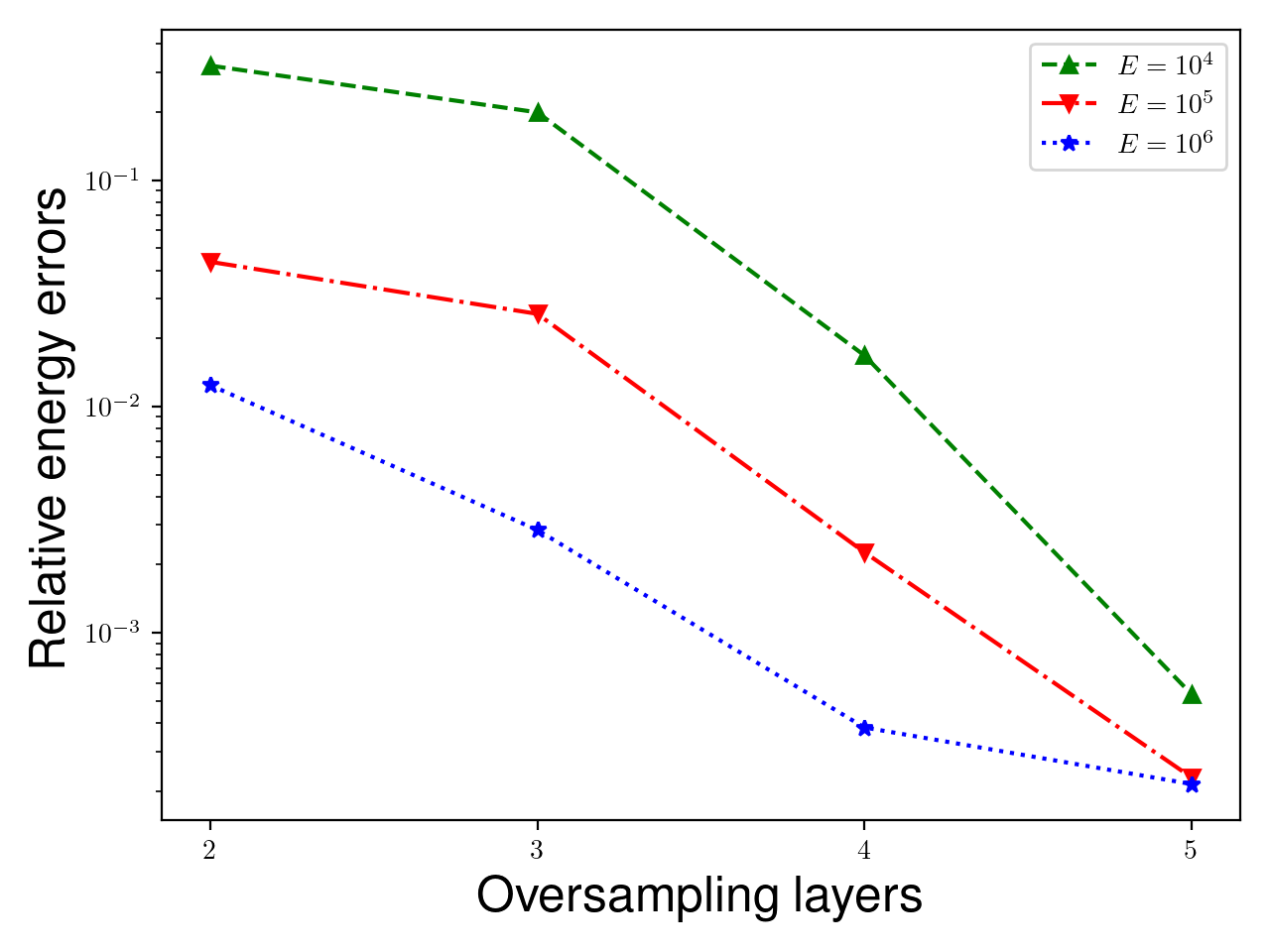}
\caption{$L^2$ norm of relative errors of numerical solutions of Neumann BVPs with respect to different oversampling layers $m$ and Young's modulus $E$. } \label{fig:errors5}
\end{figure}

\begin{table}[H]
\centering
\topcaption{The numerical errors of the model problem in \autoref{fig:Medium config 3} in the energy $H^1$ and $L^{2}$ norm with different $Noc$ and different $E $, while $Nbf=2$ and $H=1/20.$}
\label{table5}
\begin{tabular}{|c|c|c|c|}
\hline
$E$   & $Noc$ & $\left\|u_{h}-u_{\mathrm{cem}}\right\|_{H^{1}\left(\Omega\right)} $ & $\left\|u_{h}-u_{\mathrm{cem}}\right\|_{L^{2}\left(\Omega\right)}$ \\ \hline
\multirow{6}{*}{$10^4$} &2 & 25.45\% & 6.26\% \\ \cline{2-4}
     & 3   & 19.85\% &  3.89\% \\ \cline{2-4}
     & 4   & 5.76\%  & 0.33\%   \\ \cline{2-4}
     & 5   & 1.00\%  &  0.01\%  \\ \cline{2-4}
     & ref   & 79.08\%    & 19.53\%     \\ \hline
\multirow{6}{*}{$10^5$}& 2   & 9.47\% &  0.73\%\\ \cline{2-4}
      & 3   & 6.63\% & 0.43\%  \\ \cline{2-4}
      & 4   & 1.96\% &  0.04\%   \\ \cline{2-4}
      & 5   & 0.36\%  &  $<1.00\times10^{-4}$  \\ \cline{2-4}
      & ref   & 65.87\%   &  16.94\%      \\ \hline
\multirow{6}{*}{$10^6$} &2   &5.40\% & 0.21\% \\ \cline{2-4}
     & 3   & 2.22\% &  0.048\%  \\ \cline{2-4}
     & 4   & 0.06\% &  $<1.00\times10^{-4}$ \\ \cline{2-4}
     & 5   & 0.17\%  & $<1.00\times10^{-4}$ \\ \cline{2-4}
     & ref   & 65.40\%    &    16.75\%     \\ \hline
\end{tabular}
\end{table}

In second experiment, as shown in \autoref{fig:errors6} and \autoref{fig:errors5}, for fixed $Nbf=2$ and $H=1/20$, the inaccuracy improves dramatically as the number of $Noc$ for the same contrast increases. Specially, with the $Noc$ number of $5$, for both the energy and $L^2$ norm of the combination of  Dirichlet and  Neumann correctors drop below $1\%$ for contrast levels of $10^4, 10^5,$ and $10^6.$ Summarizing this data, our technique achieves excellent efficiency for high-contrast materials.
}

\section*{Acknowledgments}
The research of Eric Chung is partially supported by the Hong Kong RGC General Research Fund (Project numbers 14304719 and 14302620) and CUHK Faculty of Science Direct Grant 2021-22.

\small

\bibliographystyle{siam}

\bibliography{cem-gmsfeminhonmo2}

\end{document}